\newtheorem{Theorem}{Theorem}[section]
\newtheorem{Lemma}{Lemma}[section]
\newtheorem{Proposition}{Proposition}[section]
\newtheorem{Remark}{Remark}[section]
\numberwithin{equation}{section}
\def\XXint#1#2#3{{\setbox0=\hbox{$#1{#2#3}{\int}$ }
\vcenter{\hbox{$#2#3$ }}\kern-.6\wd0}}
\DeclareMathOperator{\diver}{div}
\def\r3{\mathbb{R}^3}
\newcommand{\na}{\nabla}
\newcommand{\va}{\varepsilon}
\newcommand{\de}{\delta}
\newcommand{\De}{\Delta}
\newcommand{\pa}{\partial}
\begin{document}
\bibliographystyle{plain}

\title[  3D incompressible inhomogeneous viscoelastic system ]{\bf Global solutions of the 3D incompressible inhomogeneous viscoelastic system}
%\date{\today}
\author{Chengfei Ai}
\address{School of Mathematics and Statistics, Yunnan University,  Kunming,  Yunnan, 650091, China.}
\email[C.F. Ai]{aicf5206@163.com}
\author{Yong Wang}
\address{South China Research Center for Applied Mathematics and Interdisciplinary Studies, School of Mathematical
Sciences, South China Normal University, Guangzhou, 510631, China.}
\email[Y. Wang]{wangyongxmu@163.com}

\thanks{Corresponding author: Yong Wang,\ wangyongxmu@163.com }

\begin{abstract}
In this paper, we prove the global existence of strong solutions for the 3D incompressible inhomogeneous viscoelastic system. We do not assume the ``initial state" assumption and the ``div-curl" structure inspired by the works \cite{Zhu2018,Zhu2022}. It is a key to transform the original system into a suitable dissipative system by introducing a new effective tensor, which is useful to establish a series of energy estimates with appropriate time weights.
\bigbreak
\noindent
{\bf \normalsize Keywords. }  {Incompressible inhomogeneous viscoelastic system; Time-weighted energies; Strong solutions. }
\bigbreak
\end{abstract}
\subjclass[2010]{35Q35; 35B40; 76A10.}
\maketitle
\section{Introduction}

Viscoelastic fluids, due to their viscous and elastic properties, exhibit some remarkable different phenomena from purely viscous and purely elastic fluids, which are abundant in nature (e.g.,
animal blood, clay, natural asphalt, etc) and present in our daily lives (e.g., toothpaste, paints, bioactive fluids, biomaterials, photoresist, etc). In those viscoelastic fluids, many exhibit the dynamic behaviors of the incompressible inhomogeneous (namely, density-dependent incompressible) fluids, see
\cite{Joseph1990,Larson1999,Renardy-Hrusa-Nohel1987,Renardy2000} for more physical backgrounds. Based on this, we focus on the analysis of the incompressible inhomogeneous viscoelastic system \eqref{1.1} below in this paper, which is verified by using an energetic variational approach as done in Appendix A even though it has been directly proposed in \cite{Hu-Wang2009-2}.

In this paper, we mainly investigate the three-dimensional (3D) incompressible inhomogeneous viscoelastic system:
\begin{equation}\label{1.1}
\begin{cases}
\tilde{\rho}_{t}+u\cdot\nabla\tilde{\rho}=0,\\
\tilde{\rho} u_{t}+\tilde{\rho}u\cdot\nabla u+\nabla p=\mu\Delta u+c^{2}\diver(\tilde{\rho} \mathbb{F}\mathbb{F}^{T}),\\
\mathbb{F}_{t}+ u\cdot\nabla \mathbb{F}=\nabla u\mathbb{F}, \\
\diver u=0, \qquad\qquad (x,t)\in\mathbb{R}^{3}\times\mathbb{R}^{+},
\end{cases}
\end{equation}
which is supplemented with the initial data
\begin{equation}\label{1.1'}
(\tilde{\rho},u,\mathbb{F})(x,t)\mid_{t=0}=(\tilde{\rho}_{0}(x),u_{0}(x),\mathbb{F}_{0}(x))\to (1,0,\mathbb{I})\quad \mbox{as}\quad x\to\infty.
\end{equation}
Here $\tilde{\rho}>0$ is the fluid density, $u\in\mathbb{R}^{3}$ is the velocity field, $\mathbb{F}\in \mathbb{M}^{3\times3}$ (the set of $3\times 3$ matrices with positive determinants) is the deformation gradient tensor of fluids, $p$ is the pressure (the Lagrange multiplier), and $\mathbb{I}$ is the identity matrix. The constants $\mu>0$ and $c>0$ represent the shear viscosity and the speed of elastic wave propagation, respectively.

In recent years, viscoelastic systems have been widely studied by many researchers, mainly focusing on studying incompressible (homogeneous or inhomogeneous) and compressible viscoelastic systems from different perspectives. When the density $\tilde{\rho}\equiv1$ in (\ref{1.1}), the system (\ref{1.1}) is reduced to the incompressible homogeneous viscoelastic system. For the incompressible homogeneous case, let $\mathbb{E}=\mathbb{F}-\mathbb{I}$ be the perturbation. Lin et al. \cite{Lin-Liu-Zhang2005} investigated an auxiliary vector field with the ``div" structure $\diver\mathbb{E}_{0}^{T}=0$, then they obtained the global existence of small solutions in 2D case. Later, Lei et al. \cite{Lei-Liu-Zhou2008} assumed the following ``curl" structure
\begin{equation}\label{0.2}
\nabla_{k}\mathbb{E}_{0}^{ij}-\nabla_{j}\mathbb{E}_{0}^{ik}=\mathbb{E}_{0}^{lj}\nabla_{l}\mathbb{E}_{0}^{ik}-\mathbb{E}_{0}^{lk}\nabla_{l}\mathbb{E}_{0}^{ij},
\end{equation}
which is preserved along the time evolution. Employing the ``div" structure and the ``curl" structure, Lei et al. \cite{Lei-Liu-Zhou2008} proved the existence of global small solutions in 2 and 3 space dimensions. Meanwhile, Chen and Zhang \cite{Chen-Zhang2006} used another ``curl-free" structure $\nabla\times(\mathbb{F}_{0}^{-1}-\mathbb{I})=0$ to justify the same problem as considered in \cite{Lei-Liu-Zhou2008}. Under the same structural assumptions, the well-posedness in the critical $L^{p}$ framework, the weak-strong uniqueness and the blow up criterion were also obtained in \cite{Feng-Zhu-Zi2017,Hu-Wu2015,Zhang-Fang2012}, respectively. For more related topics, one can also refer to
\cite{Lin-Zhang2008,Fang-Zhang-Zi2018,Feng-Wang-Wu2022,He-Zi2021,Hu-Lin2016,Wang-Wu-Xu-Zhong2022,Zhang2014,
Jiang-Jiang2021,Jiang-Jiang-Wu2017,Jiang-Jiang-Zhan2020,He-Xu2010,Lei-Liu-Zhou2008,Lei-Liu-Zhou2007,Lei-Zhou2005} and the references cited therein.

For the compressible version corresponding to \eqref{1.1}, most of the well-posedness results are mainly based on the following structural assumptions:
\begin{equation}\label{0.3}
\text{the ``initial state" assumption:\ \ \ }\tilde{\rho}_{0}\det \mathbb{F}_{0}=1;
\end{equation}
\begin{equation}\label{0.4}
\text{the compressible ``div" structure:\ \ \ }\diver (\tilde{\rho}_{0}\mathbb{F}_{0}^{T})=0;
\end{equation}
\begin{equation}\label{0.5}
\text{the compressible ``curl" structure:\ \ \ }\mathbb{F}_{0}^{lk}\nabla_{l}\mathbb{F}_{0}^{ij}-\mathbb{F}_{0}^{lj}\nabla_{l}\mathbb{F}_{0}^{ik}=0.
\end{equation}
Applying the ``initial state" assumption and the ``div-curl" structure in \eqref{0.3}--\eqref{0.5}, Qian and Zhang \cite{Qian-Zhang2010} proved the local large and global small solutions of the Cauchy problem in Besov spaces, see also \cite{Hu-Wang2011,Han-Zi2020}. Similar results were also obtained in Sobolev spaces, see \cite{Hu-Wang2010,Hu-Wu2013,Li-Wei-Yao2016}. For more topics on the compressible viscoelastic system, the readers can refer to \cite{Chen-Wu2018,
Hu-Zhao2020,Pan-Xu-Zhu2022,Tan-Wang-Wu2020,Wang-Wu2021,Zhu2020,
Wang-Shen-Wu-Zhang2022,Wu-Wang2023} and the references therein. Along a route similar to that of incompressible homogeneous and compressible systems, under the ``initial state" assumption and the ``div-curl" structure, the readers can refer to \cite{Fang-Han-Zhang2014,Han2016,Qiu-Fang2018,
Jiang-Wu-Zhong2016} for the incompressible inhomogeneous viscoelastic system \eqref{1.1}.
Recently, Zhu \cite{Zhu2018,Zhu2022} considered the global existence of small solutions to the incompressible homogeneous and compressible viscoelastic system without any physical structure in 3D case. The novel idea is to treat the wildest ``nonlinear term" as ``linear term" through an elegant time-weighted energy framework. By the way, the global existence of the large strong solution even in two dimensions is open whether for the incompressible or the compressible case (see
open problems listed in \cite{Hu-Lin-Liu2018}) and so there is still a long way to go.

The ``initial state" assumption and the ``div-curl" structure are the additional restrictions to the initial data, which exclude more general situations in physics. Inspired by the new methods in \cite{Zhu2018,Zhu2022}, we continue to study the incompressible inhomogeneous viscoelastic system (\ref{1.1}) without these initial restrictions. But our results are not completely parallel to the results in \cite{Zhu2018,Zhu2022}. Since the density in the system \eqref{1.1} is not constant, it is very tricky to establish the damping mechanism of density $\tilde{\rho}$ and  deformation tensor $\mathbb{F}$. And we cannot directly use the methods in \cite{Zhu2018,Zhu2022} to deal with $\nabla p$. Firstly, the second equation in \eqref{1.1} is the variable coefficient parabolic equation, which makes its analysis quite difficult. Secondly, the pressure term $\nabla p$ cannot be directly eliminated through the Helmholtz projection operator. So we need to overcome these difficulties and make some new transformation techniques and time-weighted energy estimates. It is a key to introduce the effective tensor $G$ defined in \eqref{1.2}. Making use of $G$, we can transform the system (\ref{1.1}) into a suitable dissipative system, and another key idea is the transformation of \eqref{3.0}, which are helpful in establishing various time-weighted energy estimates.

\textbf{Notation.} Throughout the paper, we use $a\lesssim b$ to denote $a\leq C b$ and $a\gtrsim b$ to denote $a\geq C b$ for some constant $C>0$. The relation $a\sim b$ represents $a\lesssim b$ and $a\gtrsim b$. Except for special emphasis, let $C$ denote a universal positive constant. Let $\nabla^{k}=\partial_{x}^{k}$ with an integer $k\geq0$ be the usual spatial derivatives of order $k$. Moreover, for $s<0$ or $s$ is not a positive integer, $\nabla^{s}$ stands for $\Lambda^{s}$, that is,
\begin{equation*}
\nabla^{s}f=\Lambda^{s}f:=\mathscr{F}^{-1}(|\xi|^s\mathscr{F}f),
\end{equation*}
where $\mathscr{F}$ is the usual Fourier transform operator and $\mathscr{F}^{-1}$ is its inverse (see e.g., \cite{Bahouri-Chemin-Danchin2011}). We use $\dot{H}^s(\mathbb{R}^n)$ $(s\in\mathbb{R})$ to denote the homogeneous Sobolev spaces on $\mathbb{R}^n$ with norm $\|\cdot\|_{\dot{H}^s}$ defined by $\|f\|_{\dot{H}^s}:=\|\Lambda^sf\|_{L^2}$, and $H^s(\mathbb{R}^n)$, $L^{p}(\mathbb{R}^n)$ to denote the usual Sobolev spaces with norm $\|\cdot\|_{H^s}$ and the usual Lebesgue spaces with norm $\|\cdot\|_{L^{p}}$, respectively. For simplicity, we do not distinguish functional spaces when scalar-valued or vector-valued functions are involved.

Now we are in a position to present the main result.

\begin{Theorem}\label{th1.1}
Suppose that the initial data $(\tilde{\rho}_{0}, u_{0}, \mathbb{F}_{0})$ with $\diver u_0=0$ satisfies for some sufficiently small constant $\varepsilon>0$,
\begin{equation}
\||\nabla|^{-1}(\tilde{\rho}_{0}-1)\|_{H^{3}}+\||\nabla|^{-1} u_{0}\|_{H^{3}}+\||\nabla|^{-1}(\mathbb{F}_{0}-\mathbb{I})\|_{H^{3}}\leq\varepsilon,
\end{equation}
where $|\nabla|=(-\Delta)^{\frac{1}{2}}$. Then the Cauchy problem \eqref{1.1}--\eqref{1.1'} admits a unique global solution $(\tilde{\rho}, u, \mathbb{F})(t)$ such that
\begin{equation*}
\sup_{0\leq t\leq \infty}\big[\||\nabla|^{\gamma_{0}}(\tilde{\rho}-1)(t)\|_{H^{2-\gamma_{0}}}^{2}
+\||\nabla|^{-1}u(t)\|_{H^{3}}^{2}+\||\nabla|^{\gamma_{0}}(\mathbb{F}-\mathbb{I})(t)\|_{H^{2-\gamma_{0}}}^{2}\big]
+\int_{0}^{\infty}(1+t)^{2}\|\nabla^{2}u(t)\|_{H^{1}}^{2}dt\leq\varepsilon
\end{equation*}
for some $\gamma_{0}\in(0,\frac{1}{2})$.
\end{Theorem}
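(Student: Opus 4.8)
The plan is to set up a time-weighted energy framework in the spirit of \cite{Zhu2018,Zhu2022}, adapted to the inhomogeneous setting by working with the effective tensor $G$ from \eqref{1.2} and the transformation \eqref{3.0}. First I would reformulate the system in perturbation variables: write $\rho=\tilde\rho-1$, $E=\mathbb{F}-\mathbb{I}$, and derive the equations satisfied by $(\rho,u,E)$ and by the effective tensor $G$. The role of $G$ is to recombine the pressure-plus-elastic-stress block $\nabla p-c^2\diver(\tilde\rho\mathbb{F}\mathbb{F}^T)$ into something for which the principal part of the $u$-equation becomes a genuine (constant-coefficient to leading order) parabolic operator, after which the troublesome variable-coefficient terms $(\tilde\rho-1)u_t$, $(\tilde\rho-1)u\cdot\nabla u$ are treated perturbatively. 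The transformation \eqref{3.0} should similarly convert the transport equations for $\rho$ and $E$ into equations exhibiting a damping term, so that one actually recovers dissipation for $\nabla\rho$ and $\nabla E$ rather than mere transport.

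The core of the argument is a priori estimates on the solution functional
\[
\mathcal{E}(t)=\||\nabla|^{\gamma_0}\rho\|_{H^{2-\gamma_0}}^2+\||\nabla|^{-1}u\|_{H^3}^2+\||\nabla|^{\gamma_0}E\|_{H^{2-\gamma_0}}^2,
\]
together with weighted dissipation integrals of the form $\int_0^t(1+s)^2\|\nabla^2u\|_{H^1}^2\,ds$ and the lower-order analogues with weights $(1+s)^k$, $k=0,1,2$. The scheme is: (i) perform the basic (weightless) energy estimate at the level of $|\nabla|^{-1}$ for $u$ and $|\nabla|^{\gamma_0}$ for $(\rho,E)$, controlling all nonlinear terms by $\sqrt{\mathcal E}$ times the dissipation via product and commutator estimates in homogeneous Sobolev spaces (using $\gamma_0\in(0,\tfrac12)$ to make the fractional-derivative product rules and the bound $\||\nabla|^{-1}(fg)\|\lesssim\cdots$ close, and to keep $|\nabla|^{-1}$ and $|\nabla|^{\gamma_0}$ on the "good" side of Sobolev embeddings); (ii) multiply the differentiated equations by increasing powers of $(1+t)$ and absorb the extra $(1+t)^{k-1}(\text{lower-order dissipation})$ terms generated by $\partial_t(1+t)^k$ into the dissipation already controlled at level $k-1$, which is where the "nonlinear-as-linear" trick enters — the worst nonlinear term is estimated using the weighted $\|\nabla^2u\|$-dissipation rather than being integrated against itself; (iii) close the bootstrap: $\mathcal E(t)+\sum_k\int_0^t(1+s)^k(\text{dissipation})\,ds\lesssim\varepsilon^2+\mathcal E(t)^{3/2}$, so smallness of the initial data propagates. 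Local existence and uniqueness is standard (variable-coefficient parabolic plus transport), so the continuation argument then yields the global solution with the stated bound.

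The main obstacle I expect is step (ii) combined with the pressure: unlike in \cite{Zhu2018,Zhu2022}, the Helmholtz projection does not kill $\nabla p$ because the leading term is $\tilde\rho u_t$, not $u_t$; one must instead estimate $p$ via an elliptic equation $-\Delta p=\diver(\tilde\rho u\cdot\nabla u)-c^2\diver\diver(\tilde\rho\mathbb{F}\mathbb{F}^T)+\mu\diver((1-\tilde\rho^{-1})\Delta u)+\cdots$ — obtained by applying $\diver$ to the momentum equation — and then show that $\|\nabla p\|$ enjoys the same (time-weighted) decay as $\|\nabla^2u\|$ plus quadratic-in-$\mathcal E$ remainders. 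Making the effective tensor $G$ absorb exactly the right portion of $\nabla p$ so that this elliptic estimate doesn't lose a derivative, and propagating the weight $(1+t)^2$ through the pressure, is the delicate point; the inhomogeneity $\tilde\rho-1$ appearing as a coefficient multiplying the highest-order term in $u$ forces one to also track its decay carefully and to keep $\varepsilon$ small enough that $\tilde\rho$ stays close to $1$ uniformly in time, which follows from the transport equation and the $L^\infty$-in-space control built into $\mathcal E$.
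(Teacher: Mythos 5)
Your overall framework (effective tensor $G$, time-weighted energies with weights $(1+t)^{k}$, $k=0,1,2$, closure of the form $\mathcal{E}\lesssim\varepsilon^{2}+\mathcal{E}^{3/2}$, continuation argument) matches the paper's strategy, but there is one genuine misconception and one structural omission that would derail the execution. First, you assert that the transformation \eqref{3.0} ``converts the transport equations for $\rho$ and $E$ into equations exhibiting a damping term, so that one actually recovers dissipation for $\nabla\rho$ and $\nabla E$.'' This is not what \eqref{3.0} does and no such dissipation exists: \eqref{3.0} is purely a renormalization of the pressure, $\nabla\tilde p:=\frac{1}{\rho+1}\nabla p$, whose sole purpose is to keep the pressure term in pure-gradient form so that the Leray projector $\mathbb{P}$ still annihilates it despite the variable coefficient. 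The density and $\mathbb{F}-\mathbb{I}$ satisfy pure transport equations and are never damped; in the paper they are only shown to remain \emph{bounded} (the assistant energy $\mathcal{E}_{a}$ in \eqref{1.9}), via a Gronwall argument that exploits $\int_{0}^{t}\|u\|_{W^{1,\infty}}\,dt'\lesssim\mathcal{E}_{w}^{1/2}+\mathcal{E}_{s}^{1/2}$ (Lemma \ref{le5.5}). Consequently your step (i) scheme of ``controlling all nonlinear terms by $\sqrt{\mathcal{E}}$ times the dissipation'' cannot work for terms involving $\rho$ or $E$; those must instead be closed by the time-integrability of $u$ furnished by the weighted energies, which is a different mechanism. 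Second, your energy functional omits $\||\nabla|^{-1}G\|_{H^{3}}$ and the dissipation norms $\|\mathbb{P}\diver G\|$; these are indispensable. The only decay the elastic part acquires is through $\mathbb{P}\diver G$, and it is extracted by multiplying the momentum equation by $\rho+1$, applying $\mathbb{P}$ (which kills $\nabla p$ exactly because of \eqref{3.0}), and testing against $\nabla^{k}|\nabla|^{-1}\mathbb{P}\diver G$, together with the antisymmetric cancellation between $\int\diver G\cdot u$ and $\int D(u):G$ (the identity $I_{1}=0$ in \eqref{3.9}). Without these two ingredients the linear forcing $\diver G$ in the $u$-equation is uncontrolled and the bootstrap does not close.

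On the pressure, your proposed route --- solving the elliptic equation obtained by applying $\diver$ to the momentum equation --- is genuinely different from the paper's and is more delicate than you suggest: since $\diver(\tilde\rho u_{t})=\nabla\tilde\rho\cdot u_{t}\neq0$, the right-hand side of that elliptic problem contains $u_{t}$, which would have to be re-expressed through the equation itself and re-estimated with the $(1+t)^{2}$ weight. The paper's device \eqref{3.0} avoids the pressure entirely (it never needs an estimate on $p$ in the energy scheme, only the qualitative justification in Remark 2.1 via Lemma \ref{le5.8}), which is the cleaner path; if you insist on the elliptic route you must show it does not lose a derivative or a time weight, and that is precisely the point your sketch leaves open.
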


\begin{Remark}
According to the definitions of various energies in section 2, in order to obtain  $\mathcal{E}_{w}(t)\leq\mathcal{E}^{\frac{1}{2}}(t)\mathcal{E}_{s}^{\frac{1}{2}}(t)$, so the small assumption about the initial data $\||\nabla|^{-1}(\tilde{\rho}_{0}-1,u_{0},\mathbb{F}_{0}-\mathbb{I})\|_{L^{2}}<\varepsilon$ in Theorem \ref{th1.1} cannot be removed. Moreover, as mentioned in \cite{Chen-Zhu2023}, the methods in this paper cannot be directly applied to the 2D  inhomogeneous case, and the new refined time-weight energy estimates need to be developed to handle it.
\end{Remark}

The rest of this paper are organized as follows. In Section 2, we first transform system (\ref{1.1}) into a suitable dissipative system, and then we carefully estimate the corresponding energies. In Section 3, with the help of the previous results, we prove theorem \ref{th1.1} by a continuous argument. In Appendix A, we derive the system \eqref{1.1} by using an energetic variational approaches. In Appendix B, we present some useful results which are frequently used in the previous sections.

\section{Energy estimate }\label{se3}
\subsection{Transformation and analysis for (\ref{1.1})}
\ \ \ \\
\textbf{Step 1.}
Since the specific values of the positive coefficients $\mu>0, c>0$ are not essential in this article, in the rest of this paper, we take $\mu=c=1$ and define $\rho:=\tilde{\rho}-1$. Next, we give the definition of the effective tensor $G$ as follows
\begin{equation}\label{1.2}
G:=\tilde{\rho}\mathbb{F}\mathbb{F}^{T}-\mathbb{I}.
\end{equation}
And we have
\begin{equation}\label{1.3}
(\tilde{\rho}\mathbb{F}\mathbb{F}^{T})_{t}+(u\cdot\nabla\tilde{\rho})\mathbb{F}\mathbb{F}^{T}+\tilde{\rho}(u\cdot\nabla\mathbb{F})\mathbb{F}^{T}
+\tilde{\rho}\mathbb{F}(u\cdot\nabla\mathbb{F}^{T})=\tilde{\rho}(\nabla u \mathbb{F})\mathbb{F}^{T}+\tilde{\rho}\mathbb{F}\mathbb{F}^{T}(\nabla u)^{T}.
\end{equation}
From (\ref{1.2}) and (\ref{1.3}), we can establish the evolution for effective tensor $G$
\begin{equation}\label{1.4}
G_{t}+u\cdot \nabla G+Q(\nabla u, G)=2D(u),
\end{equation}
where $Q(\nabla u, G)=-\nabla uG-G(\nabla u)^{T}$ and $D(u)=\frac{1}{2}(\nabla u+(\nabla u)^{T})$.

Combining (\ref{1.1}) with (\ref{1.4}), the following new system is obtained
\begin{equation}\label{1.5}
\begin{cases}
u_{t}-\Delta u+u\cdot\nabla u+(1-\frac{1}{\rho+1})\Delta u+\frac{1}{\rho+1}\nabla p+(1-\frac{1}{\rho+1})\diver G=\diver G,\\
G_{t}+ u\cdot\nabla G+Q(\nabla u, G)=2D(u), \\
\diver u=0, \qquad\qquad (x,t)\in\mathbb{R}^{3}\times\mathbb{R}^{+}.
\end{cases}
\end{equation}
\textbf{Step 2.}Various energies of $(u,G)$ in the new system (\ref{1.5}).\\
Based on the analysis above, for any $t>0$, we can state the following various energies for the new system (\ref{1.5}). The basic energy
\begin{equation}\label{1.6}
\mathcal{E}(t):=\sup_{0\leq t'\leq t}(\||\nabla|^{-1}u(t')\|_{H^{3}}^{2}+\||\nabla|^{-1}G(t')\|_{H^{3}}^{2})+\int_{0}^{t}(\|u(t')\|_{H^{3}}^{2}
+\||\nabla|^{-1}\mathbb{P}\diver G(t')\|_{H^{2}}^{2})dt',
\end{equation}
where $\mathbb{P}=\mathbb{I}-\Delta^{-1}\nabla \diver $ is the Helmholtz projection operator.\\
Two-weighted energies:\\
the slightly dissipative energy
\begin{equation}\label{1.7}
\mathcal{E}_{w}(t):=\sup_{0\leq t'\leq t}(1+t')(\|u(t')\|_{H^{2}}^{2}+\||\nabla|^{-1}\mathbb{P}\diver  G(t')\|_{H^{2}}^{2})+\int_{0}^{t}(1+t')(\|\nabla u(t')\|_{H^{2}}^{2}+\|\mathbb{P}\diver  G(t')\|_{H^{1}}^{2})dt';
\end{equation}
the strongly dissipative energy
\begin{equation}\label{1.8}
\mathcal{E}_{s}(t):=\sup_{0\leq t'\leq t}(1+t')^{2}(\|\nabla u(t')\|_{H^{1}}^{2}+\|\mathbb{P}\diver  G(t')\|_{H^{1}}^{2})+\int_{0}^{t}(1+t')^{2}(\|\nabla^{2} u(t')\|_{H^{1}}^{2}+\|\nabla\mathbb{P}\diver  G(t')\|_{L^{2}}^{2})dt'.
\end{equation}
Moreover, to obtain the uniform bound of $(\rho, \mathbb{F}-\mathbb{I})$, we also define the following assistant energy
\begin{equation}\label{1.9}
\mathcal{E}_{a}(t):=\sup_{0\leq t'\leq t}\left(\||\nabla|^{\gamma_{0}}\rho(t')\|_{H^{2-\gamma_{0}}}^{2}+\||\nabla|^{\gamma_{0}}(\mathbb{F}-\mathbb{I})(t')\|_{H^{2-\gamma_{0}}}^{2}\right),
\end{equation}
where $0<\gamma_{0}<\frac{1}{2}$.\\
Finally, the total energy $\mathcal{E}_{total}(t)$ is defined as follows
\begin{equation}\label{1.10}
\mathcal{E}_{total}(t):=\mathcal{E}(t)+\mathcal{E}_{w}(t)+\mathcal{E}_{s}(t)+\mathcal{E}_{a}(t).
\end{equation}

For the various energies defined above, by using the Gagliardo-Nirenberg inequality, we have the following result
\begin{equation}\label{1.11}
\mathcal{E}_{w}(t)\leq\mathcal{E}^{\frac{1}{2}}(t)\mathcal{E}_{s}^{\frac{1}{2}}(t).
\end{equation}

Next, we also recall the following useful results:
\begin{Lemma}\label{le5.4}
If any smooth function $g(\cdot)$ defined around 0 with $g(0)=0$, which satisfies
\begin{equation*}
\text{$g(\rho)\sim\rho$\ \ \  and  \ \ \  $\|g^{(k)}(\rho)\|_{L^{2}}\leq C(k)$\ \ \ \ \ \  for any \ \  $0\leq k \leq2$ },
\end{equation*}
then it holds that
\begin{align*}
&\|g(\rho)\|_{L^{p}}\lesssim\|\rho\|_{L^{p}}, \ \ \ \text{for some~$p$ with $1\leq p\leq\infty$} , \\
&\|\nabla^{k}g(\rho)\|_{L^{p}}\lesssim\|\nabla^{k}\rho\|_{L^{p}}, \ \ \ k=1,2.
\end{align*}
\end{Lemma}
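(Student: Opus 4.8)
The statement is a Moser-type composition estimate, and the plan is to establish the three bounds in increasing order of derivative count, reducing everything to the pointwise comparison $g(\rho)\sim\rho$, the chain/Leibniz rule, and Gagliardo-Nirenberg interpolation. For the $L^p$ bound, $g(\rho)\sim\rho$ is a pointwise relation, so $|g(\rho)(x)|\le C|\rho(x)|$ almost everywhere; raising to the $p$-th power and integrating (or taking essential suprema when $p=\infty$) gives $\|g(\rho)\|_{L^p}\lesssim\|\rho\|_{L^p}$ immediately, with $C$ the constant implicit in the hypothesis $g(\rho)\sim\rho$. For $k=1$, the chain rule gives $\nabla g(\rho)=g'(\rho)\,\nabla\rho$, and H\"older's inequality yields $\|\nabla g(\rho)\|_{L^p}\le\|g'(\rho)\|_{L^\infty}\|\nabla\rho\|_{L^p}\lesssim\|\nabla\rho\|_{L^p}$, where the boundedness of the composed function $g'(\rho)$ is guaranteed by the hypotheses on $g^{(k)}(\rho)$ together with the fact that, in the regime of Theorem~\ref{th1.1}, $\rho$ ranges in a fixed neighborhood of $0$ on which $g$ and its first two derivatives are controlled.

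The case $k=2$ is the one that requires actual work and is where I expect the only real obstacle. Differentiating twice gives, schematically, $\nabla^2 g(\rho)=g'(\rho)\,\nabla^2\rho+g''(\rho)\,\nabla\rho\otimes\nabla\rho$. The linear term is again dispatched by H\"older with $\|g'(\rho)\|_{L^\infty}\lesssim1$, producing $\|\nabla^2\rho\|_{L^p}$. The quadratic term is the delicate one: from $\|g''(\rho)\,\nabla\rho\otimes\nabla\rho\|_{L^p}\le\|g''(\rho)\|_{L^\infty}\|\nabla\rho\|_{L^{2p}}^2$ one is left with $\|\nabla\rho\|_{L^{2p}}^2$, which is not literally $\|\nabla^2\rho\|_{L^p}$. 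The idea is to close it with the Gagliardo-Nirenberg inequality
\[
\|\nabla\rho\|_{L^{2p}}\lesssim\|\nabla^2\rho\|_{L^p}^{1/2}\,\|\rho\|_{L^\infty}^{1/2}
\]
on $\mathbb{R}^3$, so that
\[
\|g''(\rho)\,\nabla\rho\otimes\nabla\rho\|_{L^p}\lesssim\|\rho\|_{L^\infty}\,\|\nabla^2\rho\|_{L^p}\lesssim\|\nabla^2\rho\|_{L^p},
\]
the last step using that $\|\rho\|_{L^\infty}$ is bounded (in fact small) by Sobolev embedding and the a priori energy bounds. Combining the two contributions gives $\|\nabla^2 g(\rho)\|_{L^p}\lesssim\|\nabla^2\rho\|_{L^p}$, which finishes the proof.

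The two points needing care are: first, the assertion that $\rho$ takes values in a fixed neighborhood of $0$ where $g,g',g''$ remain under control -- this is exactly where the smallness coming from Theorem~\ref{th1.1} (or, in the abstract formulation of the lemma, the hypotheses on $g(\rho)$ and $g^{(k)}(\rho)$) is used; and second, the bookkeeping of the exponents in the Gagliardo-Nirenberg step, which must be arranged so that the quadratic term closes at the level $\|\nabla^2\rho\|_{L^p}$ rather than at a higher-order norm. Neither is serious, so the lemma should follow by the routine composition-estimate machinery outlined above.
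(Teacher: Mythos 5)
Your proof is correct and is exactly the standard Moser-type composition argument; the paper itself offers no proof of this lemma, simply citing Proposition~2.2 of \cite{Zhu2022}, where the same decomposition $\nabla^2 g(\rho)=g'(\rho)\nabla^2\rho+g''(\rho)\nabla\rho\otimes\nabla\rho$ combined with Gagliardo--Nirenberg and the smallness of $\|\rho\|_{L^\infty}$ is used. The exponent check in your Gagliardo--Nirenberg step ($\tfrac13-\tfrac{1}{2p}=\tfrac12(\tfrac23-\tfrac1p)$ on $\mathbb{R}^3$) is right, so nothing further is needed.
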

\begin{proof}
See Proposition 2.2 in \cite{Zhu2022}.
\end{proof}

\begin{Lemma}\label{le5.5}
For any time $T>0$, we can establish the following time integral estimates of $u$ and $\mathcal{G}:=\diver G$:
\begin{align*}
&\int_{0}^{T}\Big(\|u(t,\cdot)\|_{L^{p_{0}}}+\||\nabla|^{-1}\mathcal{G}(t,\cdot)\|_{L^{p_{0}}}\Big)dt\lesssim\mathcal{E}_{w}^{\frac{1}{2}}(T)
+\mathcal{E}_{s}^{\frac{1}{2}}(T),\ \ \ 6<p_{0}\leq\infty,\\
&\int_{0}^{T}\Big(\|\nabla u(t,\cdot)\|_{\dot{H}^{p_{1}}}+\||\nabla|^{-1}\mathcal{G}(t,\cdot)\|_{\dot{H}^{p_{1}}}\Big)dt\lesssim\mathcal{E}_{w}^{\frac{1}{2}}(T)
+\mathcal{E}_{s}^{\frac{1}{2}}(T),\ \ \ 1<p_{1}\leq2,\\
&\int_{0}^{T}\Big(\|\nabla u(t,\cdot)\|_{L^{p_{2}}}+\||\nabla|^{-1}\mathcal{G}(t,\cdot)\|_{L^{p_{2}}}\Big)dt\lesssim\mathcal{E}_{w}^{\frac{1}{2}}(T)
+\mathcal{E}_{s}^{\frac{1}{2}}(T),\ \ \ 2<p_{2}\leq\infty,\\
&\int_{0}^{T}\|\nabla^{2} u(t,\cdot)\|_{L^{p_{3}}}dt\lesssim\mathcal{E}_{s}^{\frac{1}{2}}(T),\ \ \ 2\leq p_{3}\leq6.
\end{align*}
\end{Lemma}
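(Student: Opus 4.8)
\textbf{Proof proposal for Lemma \ref{le5.5}.}
The plan is to interpolate the four target norms of $u$ and $|\nabla|^{-1}\mathcal G$ between the low-order weighted bounds controlled by $\mathcal E_w(t)$ and the high-order weighted bounds controlled by $\mathcal E_s(t)$, and then absorb the time weights into $L^2_t$-integrability via Cauchy--Schwarz. Concretely, for each fixed $t$ I would first use the Gagliardo--Nirenberg--Sobolev inequality in $\r3$ to write the spatial norm appearing on the left as a product of an $L^2$-type low-frequency quantity (e.g. $\|u\|_{H^2}$, or $\||\nabla|^{-1}\mathbb P\diver G\|_{H^2}$) and an $L^2$-type high-frequency quantity (e.g. $\|\nabla u\|_{H^1}$, $\|\nabla^2 u\|_{H^1}$, or $\|\mathbb P\diver G\|_{H^1}$, $\|\nabla\mathbb P\diver G\|_{L^2}$). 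The point is that $\||\nabla|^{-1}\mathcal G\|$ in various norms reduces, via $\mathcal G=\diver G$ and the Helmholtz decomposition, to norms of $\mathbb P\diver G$ and lower-order quantities already accounted for in $\mathcal E_w,\mathcal E_s$; here I would invoke \eqref{1.11} if needed to keep the bookkeeping consistent.

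For the first estimate ($6<p_0\le\infty$), I would bound $\|u\|_{L^{p_0}}\lesssim \|u\|_{H^2}$ (Sobolev embedding in 3D) and similarly for $\||\nabla|^{-1}\mathcal G\|_{L^{p_0}}$; then splitting $\int_0^T = \int_0^T (1+t)^{-1}\cdot(1+t)\,dt$ and Cauchy--Schwarz give a bound by $\big(\int_0^T (1+t)^{-2}dt\big)^{1/2}\big(\sup_t (1+t)^2\|u\|_{H^2}^2\big)^{1/2}$, but since only $(1+t)\|u\|_{H^2}^2$ sits inside $\mathcal E_w$, I would instead interpolate $\|u\|_{H^2}^2 \lesssim \|u\|_{H^1}\|\nabla u\|_{H^1}$ and distribute the weight as $(1+t)^{1/2}\cdot(1+t)^{1/2}$, landing on $\mathcal E_w^{1/2}(T)\mathcal E_s^{1/2}(T)\lesssim\mathcal E_w^{1/2}+\mathcal E_s^{1/2}$ after Young's inequality. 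The second and third estimates are handled the same way with the embeddings $\dot H^{p_1}\hookleftarrow$ interpolation of $L^2$ and $H^2$ for $1<p_1\le 2$, and $L^{p_2}\hookleftarrow H^1\cap H^2$-type interpolation for $2<p_2\le\infty$, always pairing one derivative's worth of decay from $\mathcal E_w$ with two from $\mathcal E_s$. The fourth estimate is the cleanest: $\|\nabla^2 u\|_{L^{p_3}}\lesssim \|\nabla^2 u\|_{H^1}$ for $2\le p_3\le 6$ by Sobolev embedding, and then $\int_0^T (1+t)^{-1}(1+t)\|\nabla^2 u\|_{H^1}\,dt \le \big(\int_0^T(1+t)^{-2}dt\big)^{1/2}\big(\int_0^T (1+t)^2\|\nabla^2 u\|_{H^1}^2 dt\big)^{1/2}\lesssim \mathcal E_s^{1/2}(T)$, using only the integral part of $\mathcal E_s$.

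The main obstacle I anticipate is getting the weight exponents to match exactly: the left-hand norms sit between regularity levels whose weighted bounds in $\mathcal E_w$ and $\mathcal E_s$ carry weights $(1+t)^1$ and $(1+t)^2$ respectively, so the interpolation parameter $\theta$ in Gagliardo--Nirenberg must be chosen so that $\theta\cdot 1 + (1-\theta)\cdot 2 \ge 2\cdot\frac12 \cdot(\text{something})$—more precisely, so that after writing the weight as a product $ (1+t)^{a}(1+t)^{b}$ with $a+b$ equal to the combined available weight, one factor is $L^2_t$-summable against $\sup_t$ and the other is directly an integral term. For the borderline cases $p_0=\infty$, $p_1=2$, $p_2=\infty$, $p_3=6$ one must check the Sobolev embedding still holds with the stated (non-strict or strict) inequality, and for $|\nabla|^{-1}\mathcal G$ one must verify that the negative-order operator does not spoil the high-frequency interpolation—this is where writing $|\nabla|^{-1}\mathcal G = |\nabla|^{-1}\diver G$ and noting $|\nabla|^{-1}\diver$ is a zeroth-order Fourier multiplier (bounded on $L^p$, $1<p<\infty$, and controllable on the endpoint spaces after the decomposition into $\mathbb P\diver G$ plus a gradient part) closes the gap. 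Once the exponent arithmetic is pinned down, each of the four lines follows from one Gagliardo--Nirenberg step plus one Cauchy--Schwarz-in-time step.
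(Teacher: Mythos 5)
The paper does not actually prove this lemma: it is quoted verbatim (with $G$ in place of Zhu's deformation variable) and justified by the citation ``See Proposition 2.3 in \cite{Zhu2022}.'' Your strategy --- homogeneous Gagliardo--Nirenberg interpolation between quantities whose time-weighted norms sit in $\mathcal{E}_{w}$ and $\mathcal{E}_{s}$, followed by H\"older in time --- is exactly the mechanism behind that cited proposition, and your treatment of the fourth estimate is correct as written. However, your execution of the first estimate has a real flaw. The inequality $\|u\|_{H^{2}}^{2}\lesssim\|u\|_{H^{1}}\|\nabla u\|_{H^{1}}$ is false (it would force $\|u\|_{L^{2}}\lesssim\|\nabla u\|_{L^{2}}$), and more fundamentally \emph{any} route through $\|u\|_{L^{2}}$ or the inhomogeneous $H^{2}$ norm cannot work: the sup part of $\mathcal{E}_{w}$ only gives $\|u\|_{L^{2}}\lesssim(1+t)^{-1/2}\mathcal{E}_{w}^{1/2}$, which is not integrable in time. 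The correct move is the purely homogeneous bound $\|u\|_{L^{p_{0}}}\lesssim\|\nabla u\|_{L^{2}}^{1-\sigma}\|\nabla^{2}u\|_{L^{2}}^{\sigma}$ with $\sigma=\tfrac12-\tfrac{3}{p_{0}}\in(0,\tfrac12]$, followed by a three-factor H\"older in time against $\big(\int(1+t)\|\nabla u\|_{L^{2}}^{2}\big)^{(1-\sigma)/2}$ from $\mathcal{E}_{w}$ and $\big(\int(1+t)^{2}\|\nabla^{2}u\|_{L^{2}}^{2}\big)^{\sigma/2}$ from $\mathcal{E}_{s}$; the leftover weight is $(1+t)^{-(1+\sigma)/\text{(dual exponent)}}$, integrable precisely because $\sigma>0$, i.e.\ $p_{0}>6$. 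The same arithmetic explains the strict inequalities $p_{1}>1$ and $p_{2}>2$, which your sketch gestures at but does not pin down.

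A second point you should not claim to have closed: the energies \eqref{1.7}--\eqref{1.8} of this paper control only $\mathbb{P}\diver G$, whereas the lemma is stated for $\mathcal{G}=\diver G$. The gradient part $(\mathbb{I}-\mathbb{P})\diver G=\nabla\Delta^{-1}\diver\diver G$ is \emph{not} bounded by $\mathcal{E}_{w}$ or $\mathcal{E}_{s}$, so your parenthetical assertion that it is ``controllable on the endpoint spaces after the decomposition'' is unsupported. This is really an inconsistency inherited from importing the statement from the compressible setting of \cite{Zhu2022} (where no Helmholtz projection appears in the energies); for the argument to be self-contained here, $\mathcal{G}$ must be read as $\mathbb{P}\diver G$, in which case $|\nabla|^{-1}\mathcal{G}$ interpolates cleanly between $\||\nabla|^{-1}\mathbb{P}\diver G\|_{H^{2}}$, $\|\mathbb{P}\diver G\|_{H^{1}}$ and $\|\nabla\mathbb{P}\diver G\|_{L^{2}}$ exactly as for $u$. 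With these two corrections your proof scheme goes through and coincides with the cited one.
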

\begin{proof}
See Proposition 2.3 in \cite{Zhu2022}.
\end{proof}

\begin{Lemma}\label{le5.6}
Let $h_{1}, h_{2}$ be suitable smooth functions on $\mathbb{R}^{3}$. Then it holds that
\begin{align}\label{5.2}
&\||\nabla|^{-1}(h_{1}h_{2})\|_{L^{2}}\lesssim\|h_{1}\|_{L^{\frac{3}{1+\gamma_{0}}}}\|h_{2}\|_{L^{\frac{6}{3-2\gamma_{0}}}}
\lesssim\||\nabla|^{\frac{1}{2}-\gamma_{0}}h_{1}\|_{L^{2}}\|\nabla^{\gamma_{0}}h_{2}\|_{L^{2}},\\
&\||\nabla|^{k}(h_{1}h_{2})\|_{L^{2}}\lesssim\|h_{1}\|_{L^{\infty}}\|h_{2}\|_{\dot{H}^{k}}+\|h_{1}\|_{\dot{H}^{k}}\|h_{2}\|_{L^{\infty}}, \ \ \ k=0,1,2.
\end{align}
Furthermore, for any time $T>0$, if $h_{1}=\mathcal{G}_{i}, \partial_{j}u_{i}$ or $\Delta u_{i}\ (i,j=1,2,3)$, and $h_{2}(t,x)\in[0,T]\times\mathbb{R}^{3}$ is some suitable smooth function, then it holds that
\begin{align}\label{5.4}
&\int_{0}^{T}\||\nabla|^{-1}(h_{1}h_{2})\|_{L^{2}}dt\lesssim\int_{0}^{T}\|h_{1}\|_{L^{\frac{3}{1+\gamma_{0}}}}\|h_{2}\|_{L^{\frac{6}{3-2\gamma_{0}}}}dt
\lesssim\sup_{0\leq t\leq T}\|\nabla^{\gamma_{0}}h_{2}(t)\|_{L^{2}}\Big(\mathcal{E}_{w}^{\frac{1}{2}}(T)
+\mathcal{E}_{s}^{\frac{1}{2}}(T)\Big),\\
&\int_{0}^{T}\|\nabla(h_{1}h_{2})\|_{L^{2}}dt\lesssim\int_{0}^{T}\|\nabla h_{1}\|_{L^{2}}\||\nabla|^{\frac{1}{2}}h_{2}\|_{H^{\frac{3}{2}}}dt\lesssim\sup_{0\leq t\leq T}\||\nabla|^{\frac{1}{2}}h_{2}(t)\|_{H^{\frac{3}{2}}}\Big(\mathcal{E}_{w}^{\frac{1}{2}}(T)
+\mathcal{E}_{s}^{\frac{1}{2}}(T)\Big),
\end{align}
where $0<\gamma_{0}<\frac{1}{2}$.
\end{Lemma}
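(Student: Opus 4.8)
The statement to prove is Lemma~\ref{le5.6}, a collection of product estimates in homogeneous Sobolev/Lebesgue norms, followed by their time-integrated versions adapted to the energies $\mathcal{E}_w, \mathcal{E}_s$.

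\medskip

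\textbf{Plan of proof.} The first inequality in \eqref{5.2} is a direct application of the Hardy--Littlewood--Sobolev inequality together with H\"older: writing $|\nabla|^{-1}$ as a Riesz potential of order one, $\||\nabla|^{-1}f\|_{L^2}\lesssim \|f\|_{L^{6/5}}$ in $\mathbb{R}^3$, and then H\"older with the exponent split $\tfrac{5}{6}=\tfrac{1+\gamma_0}{3}+\tfrac{3-2\gamma_0}{6}$ gives $\|h_1 h_2\|_{L^{6/5}}\le \|h_1\|_{L^{3/(1+\gamma_0)}}\|h_2\|_{L^{6/(3-2\gamma_0)}}$. The second inequality in \eqref{5.2} then follows from two Sobolev embeddings in $\mathbb{R}^3$: $\dot H^{1/2-\gamma_0}\hookrightarrow L^{3/(1+\gamma_0)}$ (since $\tfrac12-(\tfrac12-\gamma_0)=\gamma_0$ and $\tfrac1{3/(1+\gamma_0)}=\tfrac13-\tfrac{(\tfrac12-\gamma_0)}{3}\cdot$ \dots, i.e. the scaling $\tfrac{3}{3/(1+\gamma_0)}-(\tfrac12-\gamma_0)$ matches) and $\dot H^{\gamma_0}\hookrightarrow L^{6/(3-2\gamma_0)}$ (scaling $\tfrac{3}{2}-\gamma_0 = \tfrac{3(3-2\gamma_0)}{6}$, which checks out). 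For the next displayed inequality, the bound $\||\nabla|^k(h_1h_2)\|_{L^2}\lesssim \|h_1\|_{L^\infty}\|h_2\|_{\dot H^k}+\|h_1\|_{\dot H^k}\|h_2\|_{L^\infty}$ for $k=0,1,2$ is the standard Kato--Ponce / Moser-type commutator estimate (for $k=0$ it is trivial H\"older; for $k=1,2$ it is the Leibniz rule plus interpolation, and can be quoted from a reference such as \cite{Bahouri-Chemin-Danchin2011}).

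\medskip

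\textbf{The time-integrated estimates.} For \eqref{5.4}, I start from the pointwise bound just proved, $\||\nabla|^{-1}(h_1h_2)\|_{L^2}\lesssim \|h_1\|_{L^{3/(1+\gamma_0)}}\|h_2\|_{L^{6/(3-2\gamma_0)}}$, and use $\|h_2\|_{L^{6/(3-2\gamma_0)}}\lesssim \|\nabla^{\gamma_0}h_2\|_{L^2}$ (the Sobolev embedding above). Pulling $\sup_{0\le t\le T}\|\nabla^{\gamma_0}h_2(t)\|_{L^2}$ out of the time integral reduces matters to showing $\int_0^T \|h_1(t)\|_{L^{3/(1+\gamma_0)}}\,dt \lesssim \mathcal{E}_w^{1/2}(T)+\mathcal{E}_s^{1/2}(T)$ for $h_1\in\{\mathcal{G}_i,\partial_j u_i,\Delta u_i\}$. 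Here $\tfrac{3}{1+\gamma_0}\in(2,3)$ since $\gamma_0\in(0,\tfrac12)$, so for $h_1=\partial_j u_i$ this is exactly (a case of) the third inequality in Lemma~\ref{le5.5} with $p_2=3/(1+\gamma_0)$, and for $h_1=|\nabla|^{-1}\mathcal{G}_i=(|\nabla|^{-1}\diver G)_i$ likewise; for $h_1=\mathcal{G}_i$ itself and for $h_1=\Delta u_i$ one interpolates $\mathcal{G}=|\nabla|\cdot|\nabla|^{-1}\mathcal{G}$ between the $\dot H^{p_1}$ estimate and the $L^{p_2}$ estimate of Lemma~\ref{le5.5}, respectively uses the $\nabla^2 u$ estimate (last line of Lemma~\ref{le5.5}, $2\le p_3\le 6$, and $3/(1+\gamma_0)<6$). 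Thus \eqref{5.4}${}_1$ follows. For \eqref{5.4}${}_2$ I start from the Leibniz estimate $\|\nabla(h_1h_2)\|_{L^2}\lesssim \|\nabla h_1\|_{L^2}\|h_2\|_{L^\infty}+\|h_1\|_{L^6}\|\nabla h_2\|_{L^3}$, bound both factors involving $h_2$ by $\||\nabla|^{1/2}h_2\|_{H^{3/2}}$ via Sobolev ($H^{3/2}\hookrightarrow L^\infty$ shifted, $\dot H^{1/2}\hookrightarrow L^3$), pull $\sup_{0\le t\le T}\||\nabla|^{1/2}h_2(t)\|_{H^{3/2}}$ out of the integral, and are left to control $\int_0^T(\|\nabla h_1\|_{L^2}+\|h_1\|_{L^6})\,dt$; for $h_1\in\{\mathcal{G}_i,\partial_j u_i,\Delta u_i\}$ this again is covered by the $\dot H^{p_1}$ and $L^{p_2}$ (and $\nabla^2 u$) lines of Lemma~\ref{le5.5}.

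\medskip

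\textbf{Main obstacle.} The genuinely delicate point is the bookkeeping of Sobolev exponents: one must verify that every intermediate exponent ($3/(1+\gamma_0)$, $6/(3-2\gamma_0)$, $1/2-\gamma_0$, $\gamma_0$, as well as the $p_1,p_2,p_3$ ranges invoked from Lemma~\ref{le5.5}) is admissible precisely for $\gamma_0\in(0,\tfrac12)$, and in particular that $3/(1+\gamma_0)$ lands strictly inside $(2,3)$ so that the $L^{p_2}$-line of Lemma~\ref{le5.5} applies to $h_1=\partial_j u_i$ and the interpolation arguments for $h_1=\mathcal{G}_i$ and $h_1=\Delta u_i$ go through without an endpoint failure. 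Once the exponents are pinned down, each estimate is a one-line consequence of Hardy--Littlewood--Sobolev, H\"older, the Leibniz rule, and Lemma~\ref{le5.5}; there is no nonlinear iteration or PDE structure involved here.
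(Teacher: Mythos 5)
Your proposal is correct: the paper does not prove this lemma itself but simply cites Proposition 2.4 of \cite{Zhu2022}, and your argument (Hardy--Littlewood--Sobolev with $\||\nabla|^{-1}f\|_{L^2}\lesssim\|f\|_{L^{6/5}}$, the H\"older split $\tfrac56=\tfrac{1+\gamma_0}{3}+\tfrac{3-2\gamma_0}{6}$, the embeddings $\dot H^{\frac12-\gamma_0}\hookrightarrow L^{3/(1+\gamma_0)}$ and $\dot H^{\gamma_0}\hookrightarrow L^{6/(3-2\gamma_0)}$, the Moser product estimate, and reduction of the time integrals to Lemma~\ref{le5.5}) is exactly the standard route taken in that reference. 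The exponent bookkeeping checks out, including the case $h_1=\mathcal{G}_i$ (via $\|\mathcal{G}\|_{L^{3/(1+\gamma_0)}}\lesssim\||\nabla|^{-1}\mathcal{G}\|_{\dot H^{3/2-\gamma_0}}$ with $3/2-\gamma_0\in(1,2]$) and the term $\|\nabla\Delta u\|_{L^2}=\|\nabla u\|_{\dot H^{2}}$, which is covered by the $\dot H^{p_1}$ line of Lemma~\ref{le5.5} at the endpoint $p_1=2$.
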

\begin{proof}
See Proposition 2.4 in \cite{Zhu2022}.
\end{proof}

\begin{Lemma}\label{le5.7}
Let $[\mathbb{F}_{ij}]_{3\times3}, u$ be a some smooth tensor and  three dimensional vector, respectively. Then it holds that
\begin{equation*}
\mathbb{P}\diver (u\cdot\nabla\mathbb{F})=\mathbb{P}(u\cdot\nabla\mathbb{P}\diver \mathbb{F})+\mathbb{P}(\nabla u\cdot\nabla\mathbb{F})-\mathbb{P}(\nabla u\cdot\nabla\Delta^{-1}\diver \diver \mathbb{F}),
\end{equation*}
where the $i$-th components of $\nabla u\cdot\nabla\mathbb{F}$ and $\nabla u\cdot\nabla\Delta^{-1}\diver \diver \mathbb{F}$ are written as
\begin{align*}
&[\nabla u\cdot\nabla\mathbb{F}]_{i}=\sum_{j=0}^{3}\partial_{j}u\cdot\nabla\mathbb{F}_{ij},\\
&[\nabla u\cdot\nabla\Delta^{-1}\diver \diver \mathbb{F}]_{i}=\partial_{i}u\cdot\nabla\Delta^{-1}\diver \diver \mathbb{F}.
\end{align*}
\end{Lemma}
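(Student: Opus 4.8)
The plan is to prove the commutator-type identity for $\mathbb{P}\diver(u\cdot\nabla\mathbb{F})$ by a direct computation at the level of Fourier multipliers, exploiting the explicit form $\mathbb{P}=\mathbb{I}-\Delta^{-1}\nabla\diver$ and the divergence-free condition $\diver u=0$. First I would write out the $i$-th component of $\diver(u\cdot\nabla\mathbb{F})$: since $[u\cdot\nabla\mathbb{F}]_{ij}=\sum_k u_k\partial_k\mathbb{F}_{ij}$, we get $[\diver(u\cdot\nabla\mathbb{F})]_i=\sum_j\partial_j\big(\sum_k u_k\partial_k\mathbb{F}_{ij}\big)=\sum_j\sum_k\big(\partial_j u_k\,\partial_k\mathbb{F}_{ij}+u_k\,\partial_k\partial_j\mathbb{F}_{ij}\big)$. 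Using $\diver u=0$ the first sum can be rearranged, and the second sum is exactly $\sum_k u_k\partial_k[\diver\mathbb{F}]_i$, i.e. $u\cdot\nabla\diver\mathbb{F}$ componentwise. So at this stage $\diver(u\cdot\nabla\mathbb{F})=u\cdot\nabla\diver\mathbb{F}+\nabla u\cdot\nabla\mathbb{F}$ with $[\nabla u\cdot\nabla\mathbb{F}]_i=\sum_j\partial_j u\cdot\nabla\mathbb{F}_{ij}$ (note $\partial_j u_k\partial_k\mathbb{F}_{ij}$ summed over $j,k$ is $\sum_j (\partial_j u)\cdot\nabla\mathbb{F}_{ij}$), matching the definition in the statement.

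Next I would apply $\mathbb{P}$ to both sides. The term $\mathbb{P}(\nabla u\cdot\nabla\mathbb{F})$ is already in the desired form, so the work is to show $\mathbb{P}(u\cdot\nabla\diver\mathbb{F})=\mathbb{P}(u\cdot\nabla\mathbb{P}\diver\mathbb{F})-\mathbb{P}(\nabla u\cdot\nabla\Delta^{-1}\diver\diver\mathbb{F})$. Write $\diver\mathbb{F}=\mathbb{P}\diver\mathbb{F}+\Delta^{-1}\nabla\diver\diver\mathbb{F}$ (the Helmholtz decomposition of the vector field $\diver\mathbb{F}$, since $\diver(\diver\mathbb{F})=\diver\diver\mathbb{F}$). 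Then $u\cdot\nabla\diver\mathbb{F}=u\cdot\nabla\mathbb{P}\diver\mathbb{F}+u\cdot\nabla(\Delta^{-1}\nabla\diver\diver\mathbb{F})$. For the gradient part, set $\phi:=\Delta^{-1}\diver\diver\mathbb{F}$, a scalar; then $\Delta^{-1}\nabla\diver\diver\mathbb{F}=\nabla\phi$ and $u\cdot\nabla(\nabla\phi)$ has $i$-th component $\sum_k u_k\partial_k\partial_i\phi=\sum_k u_k\partial_i\partial_k\phi=\partial_i(u\cdot\nabla\phi)-(\partial_i u)\cdot\nabla\phi$. The first term $\partial_i(u\cdot\nabla\phi)=\nabla(u\cdot\nabla\phi)$ is a pure gradient, hence killed by $\mathbb{P}$; the second term is precisely $[\nabla u\cdot\nabla\Delta^{-1}\diver\diver\mathbb{F}]_i$ as defined in the statement. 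Collecting everything and applying $\mathbb{P}$ gives the claimed identity.

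The only mildly delicate point — the place I expect to need a little care — is the rearrangement using $\diver u=0$ and the bookkeeping of indices between "$\partial_j u_k\partial_k\mathbb{F}_{ij}$" and the stated form "$\sum_j\partial_j u\cdot\nabla\mathbb{F}_{ij}$"; these agree because $\sum_{j,k}\partial_j u_k\,\partial_k\mathbb{F}_{ij}=\sum_j(\partial_j u)\cdot(\nabla\mathbb{F}_{ij})$ once one reads $\nabla\mathbb{F}_{ij}$ as the gradient in the $k$-index. One should also double-check that the summation range in $[\nabla u\cdot\nabla\mathbb{F}]_i=\sum_{j=0}^{3}$ in the statement is a typo for $\sum_{j=1}^{3}$. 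Modulo this indexing, the proof is a short exact computation: no estimates, no approximation, just the Helmholtz decomposition of $\diver\mathbb{F}$, the Leibniz rule, commutativity of partial derivatives, and the fact that $\mathbb{P}$ annihilates gradients. I would present it in two or three displayed lines, first stating $\diver(u\cdot\nabla\mathbb{F})=u\cdot\nabla\diver\mathbb{F}+\nabla u\cdot\nabla\mathbb{F}$, then decomposing $\diver\mathbb{F}$ and applying $\mathbb{P}$.
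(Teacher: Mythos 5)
Your computation is correct and is exactly the argument behind the paper's proof, which simply cites \cite[Proposition 3.1]{Zhu2018}: Leibniz's rule applied to $\diver(u\cdot\nabla\mathbb{F})$, the Helmholtz decomposition $\diver\mathbb{F}=\mathbb{P}\diver\mathbb{F}+\nabla\Delta^{-1}\diver\diver\mathbb{F}$, and the fact that $\mathbb{P}$ annihilates gradients. One small remark: the condition $\diver u=0$ is never actually used in your chain of identities (the rearrangement $\sum_j\partial_j(u_k\partial_k\mathbb{F}_{ij})=\sum_{j,k}\partial_ju_k\,\partial_k\mathbb{F}_{ij}+u\cdot\nabla[\diver\mathbb{F}]_i$ is pure Leibniz), so the identity holds for arbitrary smooth $u$, consistent with the lemma's hypotheses; and yes, $\sum_{j=0}^{3}$ in the statement is a typo for $\sum_{j=1}^{3}$.
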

\begin{proof}
See Proposition 3.1 in \cite{Zhu2018}.
\end{proof}

From the above results, we only need to estimate $\mathcal{E}_{a}(t)$, $\mathcal{E}(t)$ and $\mathcal{E}_{s}(t)$ in subsections 2.2, 2.3, and 2.4.

\subsection{The estimate of assistant energy $\mathcal{E}_{a}(t)$}

\begin{Lemma}\label{le3.1}
Assume that $\mathcal{E}_{a}(t)$ is defined as in (\ref{1.9}). Then the following estimates is given
\begin{equation}\label{3.1}
\mathcal{E}_{a}(t)\lesssim\mathcal{E}_{a}(0)+\mathcal{E}_{a}^{\frac{1}{2}}(t)(\mathcal{E}_{w}^{\frac{1}{2}}(t)+\mathcal{E}_{s}^{\frac{1}{2}}(t))
\end{equation}
for any $t>0$.
\end{Lemma}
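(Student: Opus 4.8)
The plan is to derive transport-type equations for $|\nabla|^{\gamma_0}\rho$ and $|\nabla|^{\gamma_0}(\mathbb{F}-\mathbb{I})$ from the first and third equations of \eqref{1.1}, and then run weighted $L^2$ energy estimates on them using the time-integrability of $u$ and its derivatives provided by Lemma \ref{le5.5}. Writing $\mathbb{E}:=\mathbb{F}-\mathbb{I}$, the density equation becomes $\rho_t+u\cdot\nabla\rho=0$ and the deformation equation becomes $\mathbb{E}_t+u\cdot\nabla\mathbb{E}=\nabla u\,\mathbb{E}+\nabla u$. Apply $|\nabla|^{\gamma_0}$ (and then up to two more spatial derivatives, to reach the $H^{2-\gamma_0}$ level) to each equation. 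The transport term $u\cdot\nabla$ is not skew after applying $|\nabla|^{\gamma_0}$, so one picks up commutators $[\,|\nabla|^{\gamma_0}\nabla^k, u\cdot\nabla\,]$; these are controlled by fractional Leibniz/commutator estimates (Kato–Ponce type, available from Appendix B) in terms of $\|\nabla u\|_{\dot H^{?}}$ or $\|\nabla u\|_{L^\infty}$ times $\||\nabla|^{\gamma_0}(\rho,\mathbb{E})\|_{H^{2-\gamma_0}}$. The right-hand side $\nabla u\,\mathbb{E}+\nabla u$ of the $\mathbb{E}$-equation is handled by the product estimates in Lemma \ref{le5.6}: the linear piece $\nabla u$ contributes directly through the time integrals of $\|\nabla u\|$ in various norms from Lemma \ref{le5.5}, and the quadratic piece $\nabla u\,\mathbb{E}$ is estimated by pulling $\mathbb{E}$ out in $\sup_t\|\nabla^{\gamma_0}\mathbb{E}\|$ (which is $\mathcal{E}_a^{1/2}$) and leaving a time integral of a norm of $\nabla u$ bounded by $\mathcal{E}_w^{1/2}+\mathcal{E}_s^{1/2}$.

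The key steps, in order, are: (1) differentiate the $\rho$- and $\mathbb{E}$-equations, produce the commutator identities; (2) multiply by the corresponding weight-$\gamma_0$ derivative and integrate in space, using that the leading transport term vanishes and only commutators survive; (3) bound the commutators and the quadratic term $\nabla u\,\mathbb{E}$ by $\big(\|\nabla u\|_{L^\infty}+\|\nabla u\|_{\dot H^{s}}+\|\nabla^2 u\|_{L^p}\big)$ against $\||\nabla|^{\gamma_0}(\rho,\mathbb{E})\|_{H^{2-\gamma_0}}^2$ or against $\mathcal{E}_a^{1/2}\||\nabla|^{\gamma_0}(\rho,\mathbb{E})\|_{H^{2-\gamma_0}}$; (4) integrate in time on $[0,t]$, take sup, and invoke Lemma \ref{le5.5} to replace $\int_0^t\|\nabla u\|_{\cdots}\,dt'$ by $\mathcal{E}_w^{1/2}(t)+\mathcal{E}_s^{1/2}(t)$; (5) absorb, via Young's inequality, the resulting $\mathcal{E}_a^{1/2}(t)\big(\mathcal{E}_w^{1/2}(t)+\mathcal{E}_s^{1/2}(t)\big)$ terms — or simply leave them in that form since that is exactly the claimed bound — arriving at $\mathcal{E}_a(t)\lesssim \mathcal{E}_a(0)+\mathcal{E}_a^{1/2}(t)\big(\mathcal{E}_w^{1/2}(t)+\mathcal{E}_s^{1/2}(t)\big)$.

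The main obstacle I expect is the bookkeeping at the top regularity level together with the fractional weight $|\nabla|^{\gamma_0}$: at $k=2$ one needs $\||\nabla|^{\gamma_0}\nabla^2(\rho,\mathbb{E})\|_{L^2}$, whose transport estimate requires a commutator $[\,|\nabla|^{\gamma_0}\nabla^2, u\cdot\nabla\,]$ that, when expanded, contains a term with $\nabla^3 u$ — which is not controlled by the available energies. The resolution is to distribute derivatives carefully so that the worst factor falling on $u$ is $\nabla^2 u$ (paired with a low-regularity, possibly negative-order, norm of $\rho$ or $\mathbb{E}$, which is why one works at the $|\nabla|^{\gamma_0}$ level rather than $L^2$), and to use the precise fractional product estimate \eqref{5.2} and the time integrals of $\|\nabla^2 u\|_{L^{p_3}}$, $2\le p_3\le 6$, from Lemma \ref{le5.5}; the choice $\gamma_0\in(0,\tfrac12)$ is exactly what makes the Sobolev exponents in \eqref{5.2} admissible. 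Once the derivative allocation is fixed so that no $\nabla^3 u$ appears, the remaining estimates are routine applications of Lemmas \ref{le5.4}–\ref{le5.6}.
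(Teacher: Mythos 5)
Your overall route is the same as the paper's: $L^2$ energy estimates on the transport equations for $\rho$ and $\mathbb{E}=\mathbb{F}-\mathbb{I}$ at the levels encoded in $\mathcal{E}_a$, splitting $\nabla u\,\mathbb{F}=\nabla u\,\mathbb{E}+\nabla u$, and converting all the resulting time integrals of norms of $u$ into $\mathcal{E}_w^{1/2}+\mathcal{E}_s^{1/2}$ via Lemma \ref{le5.5}. Two points in your ``obstacles'' discussion are, however, misreadings that would send you down the wrong path. First, the norm $\||\nabla|^{\gamma_0}\rho\|_{H^{2-\gamma_0}}$ has top order $|\nabla|^{2-\gamma_0}|\nabla|^{\gamma_0}\rho\sim\nabla^2\rho$, an \emph{integer} derivative; there is no term $\||\nabla|^{\gamma_0}\nabla^2\rho\|_{L^2}$ and hence no fractional commutator at the top level. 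The fractional weight only enters at the bottom level $\dot H^{\gamma_0}$, and there the paper does not use a Kato--Ponce commutator at all (none is stated in Appendix B): it simply bounds $\||\nabla|^{\gamma_0}(u\cdot\nabla\rho)\|_{L^2}\lesssim\|u\cdot\nabla\rho\|_{L^2}^{1-\gamma_0}\|\nabla(u\cdot\nabla\rho)\|_{L^2}^{\gamma_0}\lesssim\|u\|_{W^{1,\infty}}\|\nabla\rho\|_{H^1}$ by Gagliardo--Nirenberg, with no cancellation needed. Your commutator variant could be made to work, but you would have to import the estimate from outside the paper. Second, your claim that $\nabla^3u$ ``is not controlled by the available energies'' and must be engineered away is false, and the proposed fix is not viable: at the $\dot H^2$ level the linear source $\nabla u$ in the $\mathbb{E}$-equation unavoidably produces $\|\nabla^3u\|_{L^2}\|\nabla^2\mathbb{E}\|_{L^2}$. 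The correct observation, which the paper uses in \eqref{3.6}, is that $\int_0^t\|\nabla^3u\|_{L^2}\,dt'\lesssim\bigl(\int_0^t(1+t')^{-2}dt'\bigr)^{1/2}\bigl(\int_0^t(1+t')^{2}\|\nabla^2u\|_{H^1}^2dt'\bigr)^{1/2}\lesssim\mathcal{E}_s^{1/2}(t)$, so this term is harmless. With these corrections your argument coincides with the paper's.
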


\begin{proof}
Applying $|\nabla|^{\gamma_{0}}$ to the first equation of (\ref{1.1}), taking $L^{2}-$inner product of the resulting equations with $|\nabla|^{\gamma_{0}}\rho$ and combining $\diver u=0$, using H\"{o}lder's and Gagliardo-Nirenberg's inequalities, we have
\begin{align}\label{3.2}
\frac{1}{2}\frac{d}{dt}\||\nabla|^{\gamma_{0}}\rho\|_{L^{2}}^{2}&\leq\||\nabla|^{\gamma_{0}}(u\cdot\nabla\rho)\|_{L^{2}}\||\nabla|^{\gamma_{0}}\rho\|_{L^{2}}\nonumber \\
&\lesssim \|u\cdot\nabla\rho\|_{L^{2}}^{1-\gamma_{0}}\|\nabla(u\cdot\nabla\rho)\|_{L^{2}}^{\gamma_{0}}\|\rho\|_{\dot{H}^{\gamma_{0}}}\nonumber \\
&\lesssim\|u\|_{W^{1,\infty}}\|\nabla\rho\|_{H^{1}}\|\rho\|_{\dot{H}^{\gamma_{0}}},
\end{align}
where $0<\gamma_{0}<\frac{1}{2}$. Integrating (\ref{3.2}) with respect to $t'$ over $(0,t)$, using Lemma \ref{le5.5}, (\ref{1.9}) and (\ref{1.10}),  we get
\begin{align}\label{3.3}
\|\rho\|_{\dot{H}^{\gamma_{0}}}^{2}&\lesssim \|\rho_{0}\|_{\dot{H}^{\gamma_{0}}}^{2}+\mathcal{E}_{a}(t)\int_{0}^{t}\|u(t')\|_{W^{1,\infty}}dt' \nonumber \\
&\lesssim \mathcal{E}_{a}(0)+\mathcal{E}_{a}(t)\mathcal{E}_{total}^{\frac{1}{2}}(t).
\end{align}
Applying $|\nabla|^{\gamma_{0}}$ to the third equation of (\ref{1.1}), taking $L^{2}-$inner product of the resulting equations with $|\nabla|^{\gamma_{0}}(\mathbb{F}-\mathbb{I})$, similar to the estimates in (\ref{3.2}) and (\ref{3.3}), we have
\begin{align}\label{3.4}
\|\mathbb{F}-\mathbb{I}\|_{\dot{H}^{\gamma_{0}}}^{2}&\lesssim \|\mathbb{F}_{0}-\mathbb{I}\|_{\dot{H}^{\gamma_{0}}}^{2}+\mathcal{E}_{a}(t)\int_{0}^{t}(\|u(t')\|_{W^{1,\infty}}+\|\nabla^{2}u(t')\|_{L^{2}})dt' \nonumber \\
&\ \ \ \ +\mathcal{E}_{a}^{\frac{1}{2}}(t)\int_{0}^{t}\|u(t')\|_{\dot{H}^{1+\gamma_{0}}}dt'\nonumber \\
&\lesssim \mathcal{E}_{a}(0)+\mathcal{E}_{a}(t)\mathcal{E}_{total}^{\frac{1}{2}}(t)+\mathcal{E}_{a}^{\frac{1}{2}}(t)\big(\mathcal{E}_{w}^{\frac{1}{2}}(t)
+\mathcal{E}_{s}^{\frac{1}{2}}(t)\big).
\end{align}
Next, we establish the following estimates for higher-order spatial derivatives in $\mathcal{E}_{a}(t)$,
\begin{equation}\label{3.5}
\frac{1}{2}\frac{d}{dt}\|\nabla^{2}\rho\|_{L^{2}}^{2}\lesssim\|\nabla u\|_{L^{\infty}}\|\nabla^{2}\rho\|_{L^{2}}^{2}+\|\nabla^{2}u\|_{L^{3}}\|\nabla\rho\|_{L^{6}}\|\nabla^{2}\rho\|_{L^{2}}.
\end{equation}
The higher-order spatial derivative estimates of $\mathbb{F}-\mathbb{I}$ is indeed the same as the result of Lemma 3.1 in \cite{Zhu2022}.

Therefore, we have
\begin{align}\label{3.6}
\|\nabla^{2}\rho\|_{L^{2}}^{2}+\|\nabla^{2}(\mathbb{F}-\mathbb{I})\|_{L^{2}}^{2}&\lesssim\|\nabla^{2}\rho_{0}\|_{L^{2}}^{2}
+\|\nabla^{2}(\mathbb{F}_{0}-\mathbb{I})\|_{L^{2}}^{2}+\mathcal{E}_{a}(t)\int_{0}^{t}\big(\|\nabla u\|_{L^{\infty}}+\|\nabla^{2}u\|_{L^{3}}+\|\nabla^{3}u\|_{L^{2}} \big)dt'\nonumber \\
&\ \ \ \ +\mathcal{E}_{a}^{\frac{1}{2}}(t)\int_{0}^{t}\|\nabla^{3}u\|_{L^{2}}dt'.
\end{align}
Combining the results of (\ref{3.3}), (\ref{3.4}) and (\ref{3.6}), we immediately get (\ref{3.1}). This completes the proof of Lemma \ref{le3.1}.

\end{proof}

\subsection{The estimate of basic energy $\mathcal{E}(t)$}
\ \ \ \\
In this subsection, in order to estimate the basic energy $\mathcal{E}(t)$, the main difficulty is to deal with the pressure term $\frac{1}{\rho+1}\nabla p$ in the first equation of system (\ref{1.5}). To this end, we first transform the pressure $p$ to $\tilde{p}$ by
\begin{equation}\label{3.0}
\frac{1}{\rho+1}\nabla p:=\nabla\tilde{p}.
\end{equation}
\textbf{Remark 2.1.}
The definition in (\ref{3.0}) is valid. In fact, for one-dimensional case, setting $\tilde{p}(-\infty)=0$, then we have
\begin{align}\label{3.01}
&\frac{1}{\rho+1}p_{x}:=\tilde{p}_{x},\nonumber \\
&\tilde{p}(x)=\frac{1}{\rho+1}p(x)-\int_{\infty}^{x}\frac{\rho_{x}p}{(\rho+1)^{2}}dx'.
\end{align}
From (\ref{3.01}), using H\"{o}lder's and Gagliardo-Nirenberg's inequalities, we have
\begin{align*}
\|\tilde{p}\|_{L^{1}}&\lesssim \|\frac{\rho_{x}}{(1+\rho)^{2}}\|_{L^{\frac{6}{5}}}\|p\|_{L^{6}}+\|\frac{1}{1+\rho}p\|_{L^{1}}  \\
&\lesssim \|\rho_{x}\|_{L^{\frac{6}{5}}}\|p_{x}\|_{L^{2}}+\|p\|_{L^{1}}\\
&\lesssim\|\rho\|_{L^{1}}^{\frac{2}{5}}\|\rho_{x}\|_{L^{2}}^{\frac{3}{5}}\|p_{x}\|_{L^{2}}+\|p\|_{L^{1}}<\infty.
\end{align*}
Similarly, for three-dimensional case, using Lemma \ref{le5.8}, we also have corresponding estimates.

Then we rewrite the system (\ref{1.5}) as
\begin{equation}\label{3.7}
\begin{cases}
u_{t}-\Delta u+u\cdot\nabla u+(1-\frac{1}{\rho+1})\Delta u+\nabla\tilde{p}+(1-\frac{1}{\rho+1})\diver G=\diver G,\\
G_{t}+ u\cdot\nabla G+Q(\nabla u, G)=2D(u), \\
\diver u=0, \qquad\qquad (x,t)\in\mathbb{R}^{3}\times\mathbb{R}^{+}.
\end{cases}
\end{equation}

The estimate of the basic energy $\mathcal{E}(t)$ can be established in the following lemma.

\begin{Lemma}\label{le3.2}
Assume that $\mathcal{E}(t)$ is defined as in (\ref{1.6}). Then the following estimates is given
\begin{equation}\label{3.8}
\mathcal{E}(t)\lesssim\mathcal{E}_{1}(0)+\mathcal{E}_{total}^{\frac{3}{2}}(t)+\mathcal{E}_{total}^{\frac{9}{4}}(t),
\end{equation}
for any $t>0$.
\end{Lemma}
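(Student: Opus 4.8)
The plan is to perform the standard energy estimates on the transformed system \eqref{3.7} at the three levels encoded in $\mathcal{E}(t)$: the $\||\nabla|^{-1}(u,G)\|_{L^2}$ level, the $\||\nabla|^{-1}(u,G)\|_{\dot H^k}$ levels for $k=1,2,3$ (equivalently, $\||\nabla|^{k-1}(u,G)\|_{L^2}$), and the pressure-free dissipative estimate for $\||\nabla|^{-1}\mathbb{P}\diver G\|_{H^2}$. First I would test the $u$-equation in \eqref{3.7} against $|\nabla|^{-2}u$ and the $G$-equation against $|\nabla|^{-2}G$. The good structural point is that the linear coupling $\diver G$ in the $u$-equation pairs, after integration by parts and using $\diver u=0$, with the $2D(u)$ term in the $G$-equation so that the leading-order linear terms cancel; the $-\Delta u$ term gives the dissipation $\|u\|_{L^2}^2$ (at this lowest level), and iterating with $|\nabla|^{k}$ derivatives, $k=0,1,2$, produces $\int_0^t\|u\|_{H^3}^2$. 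The key observation for the pressure is that, thanks to the transformation \eqref{3.0}, the troublesome term is now $\nabla\tilde p$, which is a pure gradient and hence is annihilated by the Helmholtz projection and (since $\diver u=0$) drops out of every $L^2$ energy estimate on $u$ after integration by parts. So the only genuinely nonlinear contributions are $u\cdot\nabla u$, $(1-\tfrac{1}{\rho+1})\Delta u$, $(1-\tfrac{1}{\rho+1})\diver G$ in the $u$-equation, and $u\cdot\nabla G$, $Q(\nabla u,G)$ in the $G$-equation.

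Next I would bound these nonlinear terms using the time-integrated machinery of Lemmas \ref{le5.5} and \ref{le5.6}. Writing $1-\tfrac{1}{\rho+1}=\tfrac{\rho}{\rho+1}=:g(\rho)$ with $g(\rho)\sim\rho$, Lemma \ref{le5.4} lets me trade derivatives of $g(\rho)$ for derivatives of $\rho$, which are controlled by $\mathcal{E}_a(t)$; the paired factor $\Delta u$ or $\diver G$ is controlled in the relevant $L^p$ or $\dot H^k$ norm by $\mathcal{E}_w^{1/2}+\mathcal{E}_s^{1/2}$ via Lemma \ref{le5.5}, and the product estimates \eqref{5.2}–\eqref{5.4} of Lemma \ref{le5.6} assemble these into bounds of the form $\mathcal{E}_a^{1/2}(t)\big(\mathcal{E}_w^{1/2}(t)+\mathcal{E}_s^{1/2}(t)\big)\cdot\|(u,G)\|_{\text{energy}}$, hence into $\mathcal{E}_{total}^{3/2}(t)$ after absorbing the energy factor. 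The transport terms $u\cdot\nabla u$ and $u\cdot\nabla G$ are handled with commutator/product estimates at negative regularity; at the $|\nabla|^{-1}$ level one moves the $|\nabla|^{-1}$ onto the product and uses \eqref{5.4}, while at positive levels $|\nabla|^{k}$, $k=0,1,2$, one distributes the derivatives with Lemma \ref{le5.6}'s second inequality. For the dissipative estimate on $\|\,|\nabla|^{-1}\mathbb{P}\diver G\|_{H^2}$ I would apply $\mathbb{P}\diver$ to the $G$-equation and use Lemma \ref{le5.7} to rewrite $\mathbb{P}\diver(u\cdot\nabla G)$; pairing $\mathbb{P}\diver G$ with the velocity equation (where $\nabla\tilde p$ again disappears under $\mathbb{P}$) yields, via a cross term with $-\Delta u$, the dissipation $\int_0^t\||\nabla|^{-1}\mathbb{P}\diver G\|_{H^2}^2$, at the cost of error terms of the same $\mathcal{E}_{total}^{3/2}$ type plus, from the quadratic terms paired against $|\nabla|^{-1}$-type weights, contributions estimated by $\mathcal{E}_{total}^{9/4}$ (the $9/4$ power arising when one uses \eqref{1.11}, $\mathcal{E}_w\le\mathcal{E}^{1/2}\mathcal{E}_s^{1/2}$, together with an $\mathcal{E}_a^{1/2}$ or $\mathcal{E}^{1/2}$ prefactor on a term already of size $\mathcal{E}_w^{1/2}(\mathcal{E}_w^{1/2}+\mathcal{E}_s^{1/2})$).

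Summing the resulting differential inequalities over the derivative levels and integrating in time, with $\mathcal{E}_1(0)$ collecting the initial-data contributions $\||\nabla|^{-1}(u_0,G_0)\|_{H^3}^2$, gives \eqref{3.8}. I expect the main obstacle to be the pressure term: even after the reduction \eqref{3.0}, one must justify that $\nabla\tilde p$ is a legitimate object in the function spaces used (this is exactly what Remark~2.1 and Lemma \ref{le5.8} are for) and, more delicately, control the \emph{commutator-type} error that \eqref{3.0} hides — namely that replacing $\tfrac{1}{\rho+1}\nabla p$ by $\nabla\tilde p$ is only an identity, so wherever $p$ itself (not $\nabla p$) would have appeared one now carries $\tilde p$, whose estimates require the product/composition bounds of Lemmas \ref{le5.4} and \ref{le5.6} applied to $\tfrac{\rho_x p}{(\rho+1)^2}$-type quantities. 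The secondary difficulty is bookkeeping the time weights: at the basic-energy level there are no explicit $(1+t')$ weights, so every nonlinearity must be closed purely through the time-\emph{integrability} furnished by Lemma \ref{le5.5}, and one has to check that each product lands in a norm for which that lemma (or Lemma \ref{le5.6}'s \eqref{5.4}) actually supplies an $L^1_t$ bound.
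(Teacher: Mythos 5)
Your proposal is correct and follows essentially the same route as the paper: the same splitting of $\mathcal{E}(t)$ into the symmetric $\nabla^{k}|\nabla|^{-1}$-level estimates (with the cancellation of the linear coupling $\diver G\leftrightarrow 2D(u)$ and the disappearance of $\nabla\tilde p$ under $\diver u=0$), the same treatment of the nonlinearities via Lemmas \ref{le5.4}--\ref{le5.6}, and the same strategy of pairing the Helmholtz-projected velocity equation with $\nabla^{k}|\nabla|^{-1}\mathbb{P}\diver G$ (using \eqref{3.21} and Lemma \ref{le5.7} for the $u_t$, $G_t$ terms) to extract $\mathcal{E}_2(t)$. The only implementation detail you leave implicit is that the paper first multiplies the velocity equation by $\rho+1$ before applying $\mathbb{P}$, which restores the pure gradient $\nabla p$ (sidestepping \eqref{3.0} in this step) and produces the term $\mathbb{P}(\rho u_t)$, handled by integration by parts in time together with the continuity equation.
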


\begin{proof}
We first divide the basic energy $\mathcal{E}(t)$ into the following two independent energy estimates
\begin{align*}
&\mathcal{E}_{1}(t)=\sup_{0\leq t'\leq t}(\||\nabla|^{-1}u(t')\|_{H^{3}}^{2}+\||\nabla|^{-1}G(t')\|_{H^{3}}^{2})+\int_{0}^{t}\|u(t')\|_{H^{3}}^{2}dt', \\
&\mathcal{E}_{2}(t)=\int_{0}^{t}\||\nabla|^{-1}\mathbb{P}\diver  G(t')\|_{H^{2}}^{2}dt'.
\end{align*}
\textbf{Step 1.} The estimate of $\mathcal{E}_{1}(t)$.\\
Applying the operator $\nabla^{k}|\nabla|^{-1}(k=0,...,3)$ to (\ref{3.7}). Then taking inner product with $(2\nabla^{k}|\nabla|^{-1}u,\nabla^{k}|\nabla|^{-1}G)$ for $(\ref{3.7})_{1}, (\ref{3.7})_{2}$, respectively. We have
\begin{align}\label{3.9}
\frac{1}{2}\frac{d}{dt}\left(2\||\nabla|^{-1}u\|_{H^{3}}^{2}+\||\nabla|^{-1}G\|_{H^{3}}^{2}\right)+2\|u\|_{H^{3}}^{2}=I_{1}+I_{2}+I_{3}+I_{4}+I_{5},
\end{align}
where
\begin{align*}
&I_{1}=\sum_{k=0}^{3}2\int_{\mathbb{R}^{3}}\left(\nabla^{k}|\nabla|^{-1}\diver  G\nabla^{k}|\nabla|^{-1}u+\nabla^{k}|\nabla|^{-1}D(u)\nabla^{k}|\nabla|^{-1}G\right)dx,\\
&I_{2}=-2\sum_{k=0}^{3}\int_{\mathbb{R}^{3}}\nabla^{k}|\nabla|^{-1}(u\cdot\nabla u)\nabla^{k}|\nabla|^{-1}u dx,\\
&I_{3}=-\int_{\mathbb{R}^{3}}|\nabla|^{-1}(u\cdot\nabla G)|\nabla|^{-1}G dx-\sum_{k=0}^{2}\int_{\mathbb{R}^{3}}\nabla^{k}(u\cdot\nabla G)\nabla^{k}G dx,\\
&I_{4}=-2\sum_{k=0}^{3}\int_{\mathbb{R}^{3}}\nabla^{k}|\nabla|^{-1}\left[\frac{\rho}{\rho+1}(\diver  G+\Delta u)\right]\nabla^{k}|\nabla|^{-1}u dx,\\
&I_{5}=-\int_{\mathbb{R}^{3}}\nabla^{k}|\nabla|^{-1}Q(\nabla u,G)\nabla^{k}|\nabla|^{-1}G dx.
\end{align*}
For $I_{1}, I_{2}$, similar to the proof of Lemma 3.2 in \cite{Zhu2018}, we can get
\begin{align}\label{3.10}
I_{1}=0,\ \ \ \  \int_{0}^{t}|I_{2}(t')|dt'\lesssim\mathcal{E}^{\frac{3}{2}}(t).
\end{align}
For $I_{3}$, using H\"{o}lder's inequality, Sobolev imbedding theorem and divergence free condition $\diver u=0$, we have
\begin{align}\label{3.11}
|I_{3}|&\lesssim\|u\otimes G\|_{L^{2}}\|\nabla|^{-1}G\|_{L^{2}}+\sum_{k=1}^{2}\left|\int_{\mathbb{R}^{3}}\nabla^{k}(u\cdot\nabla G)\nabla^{k}G dx\right| \nonumber \\
&\lesssim\|u\|_{L^{\infty}}\|\nabla|^{-1}G\|_{H^{3}}^{2}+(\|\nabla u\|_{L^{\infty}}\|\nabla G\|_{L^{2}}^{2}+\|\nabla^{2} u\|_{L^{2}}\|\nabla G\|_{L^{\infty}}\|\nabla^{2} G\|_{L^{2}}+\|\nabla u\|_{L^{\infty}}\|\nabla^{2} G\|_{L^{2}}^{2})\nonumber \\
&\lesssim(\|\nabla u\|_{L^{2}}^{\frac{1}{2}}\|\nabla^{2} u\|_{L^{2}}^{\frac{1}{2}}+\|\nabla^{2}u\|_{H^{1}})\|\nabla|^{-1}G\|_{H^{3}}^{2}.
\end{align}
Integrating (\ref{3.11}) from $0$ to $t$, we have
\begin{align}\label{3.12}
\int_{0}^{t}|I_{3}(t')|dt'&\lesssim\sup_{0\leq t'\leq t}\|\nabla|^{-1}G\|_{H^{3}}^{2}\bigg[\int_{0}^{t}(1+t')^{-\frac{3}{4}}(1+t')^{\frac{1}{4}}\|\nabla u\|_{L^{2}}^{\frac{1}{2}}(1+t')^{\frac{1}{2}}\|\nabla^{2} u\|_{L^{2}}^{\frac{1}{2}}dt'\nonumber \\
&\ \  \  \ +\int_{0}^{t}(1+t')^{-1}(1+t')\|\nabla^{2}u\|_{H^{1}}dt' \bigg]\nonumber \\
&\lesssim\mathcal{E}(t)\Big(\mathcal{E}_{w}^{\frac{1}{4}}(t)\mathcal{E}_{s}^{\frac{1}{4}}(t)+\mathcal{E}_{s}^{\frac{1}{2}}(t)\Big)\nonumber \\
&\lesssim\mathcal{E}(t)\Big(\mathcal{E}_{w}^{\frac{1}{2}}(t)+\mathcal{E}_{s}^{\frac{1}{2}}(t)\Big)\nonumber \\
&\lesssim\mathcal{E}(t)\Big(\mathcal{E}^{\frac{1}{2}}(t)+\mathcal{E}_{s}^{\frac{1}{2}}(t)\Big)\lesssim\mathcal{E}^{\frac{3}{2}}(t)+\mathcal{E}_{s}^{\frac{3}{2}}(t).
\end{align}
For $I_{4}$, let $g(\rho):=\frac{\rho}{\rho+1}$, applying H\"{o}lder's inequality and Lemma \ref{le5.2}, we have
\begin{align}\label{3.13}
|I_{4}|&\lesssim\||\nabla|^{-1}[g(\rho)(\diver  G+\Delta u)]\|_{L^{2}}\||\nabla|^{-1}u\|_{L^{2}}+\|g(\rho)(\diver  G+\Delta u)]\|_{H^{1}} \|u\|_{H^{3}}\nonumber \\
&\lesssim\|g(\rho)\|_{L^{\frac{6}{3-2\gamma_{0}}}}\|\diver  G+\Delta u\|_{L^{\frac{3}{1+\gamma_{0}}}}\||\nabla|^{-1}u\|_{L^{2}}+\Big(\|g(\rho)\|_{L^{\infty}}\|\diver  G+\Delta u\|_{H^{1}}\nonumber \\
&\ \ \ \ +\|\nabla g(\rho)\|_{L^{3}}\|\diver  G+\Delta u\|_{L^{6}}\Big)\|u\|_{H^{3}}.
\end{align}
Integrating (\ref{3.13}) from $0$ to $t$, and using Lemma \ref{le5.4}, we have
\begin{align}\label{3.14}
\int_{0}^{t}|I_{4}(t')|dt'&\lesssim\sup_{0\leq t'\leq t}\|g(\rho)\|_{L^{\frac{6}{3-2\gamma_{0}}}}\||\nabla|^{-1}u\|_{L^{2}}\int_{0}^{t}\|\diver  G+\Delta u\|_{L^{\frac{3}{1+\gamma_{0}}}}dt'\nonumber \\
&\ \ \ \ +\sup_{0\leq t'\leq t}\|g(\rho)\|_{L^{\infty}\cap \dot{W}^{1,3}}\int_{0}^{t}\|\diver  G+\Delta u\|_{H^{1}}\|u\|_{H^{3}}dt'  \nonumber \\
&\lesssim\mathcal{E}^{\frac{1}{2}}(t)\mathcal{E}_{a}^{\frac{1}{2}}(t)\Big(\mathcal{E}_{w}^{\frac{1}{2}}(t)+\mathcal{E}_{s}^{\frac{1}{2}}(t)\Big)
+\mathcal{E}_{a}^{\frac{1}{2}}(t)\mathcal{E}(t)\nonumber \\
&\lesssim\mathcal{E}^{\frac{3}{2}}(t)+\mathcal{E}_{a}^{\frac{3}{2}}(t)+\mathcal{E}_{w}^{\frac{3}{2}}(t)+\mathcal{E}_{s}^{\frac{3}{2}}(t).
\end{align}
For $I_{5}$, similarly, we have the following estimate
\begin{align}\label{3.15}
I_{5}&\lesssim\||\nabla|^{-1}Q\|_{L^{2}}\||\nabla|^{-1}G\|_{L^{2}}+\|Q\|_{H^{2}}\|G\|_{H^{2}}\nonumber \\
&\lesssim\|Q\|_{L^{\frac{6}{5}}}\||\nabla|^{-1}G\|_{L^{2}}+\Big(\|\nabla u\|_{L^{\infty}}\|G\|_{H^{2}}+\|\nabla^{2}u\|_{L^{6}}\|G\|_{W^{1,3}}+\|\nabla^{3}u\|_{L^{2}}\|G\|_{L^{\infty}}\Big)\|G\|_{H^{2}}\nonumber \\
&\lesssim\|\nabla u\|_{L^{6}}\|G\|_{L^{\frac{3}{2}}}\||\nabla|^{-1}G\|_{L^{2}}+\|\nabla^{2}u\|_{H^{1}}\|G\|_{H^{2}}^{2}\nonumber \\
&\lesssim\|\nabla^{2}u\|_{L^{2}}\||\nabla|^{-1}G\|_{L^{2}}^{\frac{1}{2}}\|G\|_{L^{2}}^{\frac{1}{2}}\||\nabla|^{-1}G\|_{L^{2}}
+\|\nabla^{2}u\|_{H^{1}}\|G\|_{H^{2}}^{2}\nonumber \\
&\lesssim\|\nabla^{2}u\|_{H^{1}}\||\nabla|^{-1}G\|_{H^{3}}^{2}.
\end{align}
Integrating (\ref{3.15}) from $0$ to $t$, we obtain
\begin{align}\label{3.16}
\int_{0}^{t}|I_{5}(t')|dt'&\lesssim\sup_{0\leq t'\leq t}\||\nabla|^{-1}G(t')\|_{H^{3}}^{2}\int_{0}^{t}\|\nabla^{2}u\|_{H^{1}}dt'\nonumber \\
&\lesssim\mathcal{E}(t)\mathcal{E}_{s}^{\frac{1}{2}}(t)\lesssim\mathcal{E}^{\frac{3}{2}}(t)+\mathcal{E}_{s}^{\frac{3}{2}}(t).
\end{align}
Finally, taking into account (\ref{3.10}), (\ref{3.12}), (\ref{3.14}), (\ref{3.16}) and (\ref{3.9}), we get
\begin{equation}\label{3.17}
\mathcal{E}_{1}(t)\lesssim\mathcal{E}_{1}(0)+\mathcal{E}_{total}^{\frac{3}{2}}(t).
\end{equation}

\textbf{Step 2.} The estimate of $\mathcal{E}_{2}(t)$.\\
Multiplying the first equation of system (\ref{1.5}) by $\rho+1$,  and using operator $\mathbb{P}$ on the resulting identities, we get
\begin{equation}\label{3.18}
\mathbb{P}(\rho u_{t})+u_{t}-\Delta u+\mathbb{P}(\rho u\cdot\nabla u)+\mathbb{P}(u\cdot\nabla u)=\mathbb{P}(\diver  G),
\end{equation}
Applying $\nabla^{k}|\nabla|^{-1}(k=0,1,2)$ to (\ref{3.18}), and taking inner product with $\nabla^{k}|\nabla|^{-1}\mathbb{P}(\diver  G)$, we can obtain
\begin{equation}\label{3.19}
\||\nabla|^{-1}\mathbb{P}(\diver  G)\|_{H^{2}}^{2}=I_{6}+I_{7}+I_{8}+I_{9}+I_{10},
\end{equation}
where
\begin{align*}
&I_{6}=-\sum_{k=0}^{2}\int_{\mathbb{R}^{3}}\nabla^{k}|\nabla|^{-1}\Delta u\nabla^{k}|\nabla|^{-1}\mathbb{P}(\diver  G)dx,\\
&I_{7}=\sum_{k=0}^{2}\int_{\mathbb{R}^{3}}\nabla^{k}|\nabla|^{-1}\mathbb{P}(u\cdot\nabla u)\nabla^{k}|\nabla|^{-1}\mathbb{P}(\diver  G) dx,\\
&I_{8}=\sum_{k=0}^{2}\int_{\mathbb{R}^{3}}\nabla^{k}|\nabla|^{-1}u_{t}\nabla^{k}|\nabla|^{-1}\mathbb{P}(\diver  G) dx,\\
&I_{9}=\sum_{k=0}^{2}\int_{\mathbb{R}^{3}}\nabla^{k}|\nabla|^{-1}\big(\mathbb{P}(\rho u_{t})\big)\nabla^{k}|\nabla|^{-1}\mathbb{P}(\diver  G)  dx,\\
&I_{10}=\sum_{k=0}^{2}\int_{\mathbb{R}^{3}}\nabla^{k}|\nabla|^{-1}\big(\mathbb{P}(\rho u\cdot\nabla u)\big)\nabla^{k}|\nabla|^{-1}\mathbb{P}(\diver  G)dx.
\end{align*}
For $I_{6}, I_{7}$, similar to the proof of Lemma 3.2 in \cite{Zhu2018}, we can get
\begin{align}\label{3.20}
&\int_{0}^{t}|I_{6}(t')|dt'\lesssim\mathcal{E}_{1}^{\frac{1}{2}}(t)\mathcal{E}_{2}^{\frac{1}{2}}(t),\nonumber \\
&\int_{0}^{t}|I_{7}(t')|dt'\lesssim\mathcal{E}^{\frac{3}{2}}(t).
\end{align}
Applying $\diver $ to the second equation of system (\ref{1.5}), we have
\begin{equation}\label{3.21}
\diver  G_{t}+\diver  (u\cdot\nabla G)+\diver  Q(\nabla u, G)=\Delta u.
\end{equation}
For $I_{8}$, similar to the estimate of $N_{7}$ for Lemma 3.2 in \cite{Zhu2018}, considering (\ref{3.21}), we can get
\begin{align}\label{3.22}
\int_{0}^{t}|I_{8}(t')|dt'&\lesssim\mathcal{E}_{1}(t)+\mathcal{E}^{\frac{3}{2}}(t)+\mathcal{E}(t)\mathcal{E}_{s}^{\frac{1}{2}}(t)    \nonumber \\
&\lesssim\mathcal{E}_{1}(t)+\mathcal{E}^{\frac{3}{2}}(t)+\mathcal{E}_{s}^{\frac{3}{2}}(t).
\end{align}
For $I_{9}$, utilizing integration by parts and considering the first equation of system (\ref{1.1}) and (\ref{3.21}), we have
\begin{align}\label{3.23}
I_{9}&=\sum_{k=0}^{2}\frac{d}{dt}\int_{\mathbb{R}^{3}}\nabla^{k}|\nabla|^{-1}\big(\mathbb{P}(\rho u)\big)\nabla^{k}|\nabla|^{-1}\mathbb{P}(\diver  G)  dx-\sum_{k=0}^{2}\int_{\mathbb{R}^{3}}\nabla^{k}|\nabla|^{-1}\big(\mathbb{P}(\rho_{t} u)\big)\nabla^{k}|\nabla|^{-1}\mathbb{P}(\diver  G)  dx \nonumber \\
&\ \ \ \ -\sum_{k=0}^{2}\int_{\mathbb{R}^{3}}\nabla^{k}|\nabla|^{-1}\big(\mathbb{P}(\rho u)\big)\nabla^{k}|\nabla|^{-1}\mathbb{P}(\diver  G_{t})  dx\nonumber \\
&=\sum_{k=0}^{2}\frac{d}{dt}\int_{\mathbb{R}^{3}}\nabla^{k}|\nabla|^{-1}\big(\mathbb{P}(\rho u)\big)\nabla^{k}|\nabla|^{-1}\mathbb{P}(\diver  G)  dx+I_{91}+I_{92},
\end{align}
where
\begin{align*}
&I_{91}=\sum_{k=0}^{2}\int_{\mathbb{R}^{3}}\nabla^{k}|\nabla|^{-1}\big[\mathbb{P}\big((u\cdot\nabla\rho) u\big)\big]\nabla^{k}|\nabla|^{-1}\mathbb{P}(\diver  G)  dx,\\
&I_{92}=\sum_{k=0}^{2}\int_{\mathbb{R}^{3}}\nabla^{k}|\nabla|^{-1}\big(\mathbb{P}(\rho u)\big)\nabla^{k}|\nabla|^{-1}\mathbb{P}\big[\Delta u-\diver  (u\cdot\nabla G)-\diver  Q(\nabla u, G)\big]dx.
\end{align*}
Next, using H\"{o}lder's inequality, Sobolev imbedding theorem and Lemma \ref{le5.2}, we estimate $I_{91}$, $I_{92}$ as follows
\begin{align}\label{3.24}
I_{91}&\lesssim\||\nabla|^{-1}\big[\mathbb{P}\big((u\cdot\nabla\rho) u\big)\big]\|_{L^{2}}\||\nabla|^{-1}\mathbb{P}(\diver  G)\|_{L^{2}}+\|(u\cdot\nabla\rho) u\|_{H^{1}}\|\mathbb{P}(\diver  G)\|_{H^{1}}\nonumber \\
&\lesssim\|(u\cdot\nabla\rho) u\|_{L^{\frac{6}{5}}}\||\nabla|^{-1}\mathbb{P}(\diver  G)\|_{L^{2}}+\Big(\|u\|_{L^{\infty}}^{2}\|\nabla\rho\|_{L^{2}}\nonumber \\
&\ \ \ \ +\|u\|_{L^{\infty}}\|\nabla(u\cdot\nabla\rho)\|_{L^{2}}+\|\nabla u\|_{L^{6}}\|u\|_{L^{\infty}}\|\nabla\rho\|_{L^{3}}\Big)\|\mathbb{P}(\diver  G)\|_{H^{1}}\nonumber \\
&\lesssim\|u\cdot\nabla\rho\|_{L^{2}}\|u\|_{L^{3}}\||\nabla|^{-1}\mathbb{P}(\diver  G)\|_{L^{2}}+\Big(\|u\|_{L^{\infty}}^{2}\|\nabla\rho\|_{L^{2}}\nonumber \\
&\ \ \ \ +\|u\|_{L^{\infty}}^{2}\|\nabla^{2}\rho\|_{L^{2}}+\|u\|_{L^{\infty}}\|\nabla^{2} u\|_{L^{2}}\|\nabla\rho\|_{L^{3}}\Big)\|\mathbb{P}(\diver  G)\|_{H^{1}}\nonumber \\
&\lesssim\|u\|_{L^{\infty}}\|\nabla\rho\|_{L^{2}}\|u\|_{L^{3}}\||\nabla|^{-1}\mathbb{P}(\diver  G)\|_{L^{2}}+\|u\|_{H^{2}}^{2}\|\nabla\rho\|_{H^{1}}\|\mathbb{P}(\diver  G)\|_{H^{1}},\nonumber \\
\\
I_{92}&\lesssim\||\nabla|^{-1}(\rho u)\|_{L^{2}}\|\nabla u\|_{L^{2}}+\|\rho u\|_{H^{1}}\|\Delta u\|_{H^{1}}+\|\rho u\|_{H^{2}}\|u\otimes G\|_{H^{2}}\nonumber \\
&\ \ \ \ +\Big(\||\nabla|^{-1}(\rho u)\|_{L^{2}}+\|\rho u\|_{H^{1}}\Big)\|\nabla u G\|_{H^{2}}\nonumber \\
&\lesssim\|\rho u\|_{L^{\frac{6}{5}}}\|\nabla u\|_{L^{2}}+\Big(\|u\|_{L^{6}}\|\rho\|_{L^{3}}+\|\rho\|_{L^{\infty}}\|\nabla u\|_{L^{2}}+\|u\|_{L^{\infty}}\|\nabla \rho\|_{L^{2}} \Big)\|\Delta u\|_{H^{1}}\nonumber \\
&\ \ \ \ +\Big(\|\nabla u\|_{H^{1}}\|\rho\|_{W^{1,3}\cap L^{\infty}}+\|u\|_{L^{\infty}}\|\nabla \rho\|_{H^{1}}\Big)\|u\|_{H^{2}}\|G\|_{H^{2}}+\Big(\|\rho u\|_{L^{\frac{6}{5}}}+\|u\|_{L^{6}}\|\rho\|_{L^{3}}\nonumber \\
&\ \ \ \ +\|\rho\|_{L^{\infty}}\|\nabla u\|_{L^{2}}+\|u\|_{L^{\infty}}\|\nabla \rho\|_{L^{2}}\Big)\|\nabla u\|_{H^{2}}\|G\|_{H^{2}}\nonumber \\
&\lesssim\|u\|_{L^{2}}\|\rho\|_{L^{3}}\|\nabla u\|_{L^{2}}+\|u\|_{L^{\infty}\cap\dot{H}^{1}}\|\rho\|_{L^{3}\cap L^{\infty}\cap\dot{H}^{1}}\|\Delta u\|_{H^{1}}+\Big(\|\nabla u\|_{H^{1}}\|\rho\|_{W^{1,3}\cap L^{\infty}}\nonumber \\
&\ \ \ \ +\|u\|_{L^{\infty}}\|\nabla \rho\|_{H^{1}}\Big)\|u\|_{H^{2}}\|G\|_{H^{2}}+\|u\|_{L^{\infty}\cap H^{1}}\|\rho\|_{L^{3}\cap L^{\infty}\cap\dot{H}^{1}}\|\nabla u\|_{H^{2}}\|G\|_{H^{2}}.
\end{align}
Substituting (\ref{3.24}) and (2.41) into (\ref{3.23}) and integrating (\ref{3.23}) from $0$ to $t$, we get
\begin{align}\label{3.26}
\int_{0}^{t}|I_{9}(t')|dt'&\lesssim\sup_{0\leq t'\leq t}\Big(\|u\|_{L^{2}}\|\rho\|_{L^{3}}\||\nabla|^{-1}\mathbb{P}(\diver  G)\|_{L^{2}}+\|u\|_{L^{\infty}\cap\dot{H}^{1}}\|\rho\|_{L^{3}\cap L^{\infty}\cap\dot{H}^{1}}\|\mathbb{P}(\diver  G)\|_{H^{1}}\Big) \nonumber \\
&\ \ \ \ +\sup_{0\leq t'\leq t}\|\nabla \rho\|_{L^{2}}\|\nabla u\|_{H^{1}}\int_{0}^{t}\|u\|_{H^{2}}\||\nabla|^{-1}\mathbb{P}(\diver  G)\|_{L^{2}}dt'+\sup_{0\leq t'\leq t}\|\rho\|_{L^{3}}\int_{0}^{t}\|u\|_{H^{1}}^{2}dt'\nonumber \\
&\ \ \ \ +\sup_{0\leq t'\leq t}\|\nabla \rho\|_{H^{1}}\|u\|_{H^{2}}\int_{0}^{t}\|\nabla u\|_{H^{2}}\|\mathbb{P}(\diver  G)\|_{H^{1}}dt'+\sup_{0\leq t'\leq t}\|\nabla\rho\|_{H^{1}}\|G\|_{H^{2}}\int_{0}^{t}\|u\|_{H^{2}}^{2}dt'\nonumber \\
&\ \ \ \ +\sup_{0\leq t'\leq t}\|\rho\|_{L^{3}\cap L^{\infty}\cap\dot{H}^{1}}\int_{0}^{t}\|\nabla u\|_{H^{1}}\|\Delta u\|_{H^{1}}dt'+\sup_{0\leq t'\leq t}\|\rho\|_{W^{1,3}\cap L^{\infty}}\|G\|_{H^{2}}\int_{0}^{t}\|u\|_{H^{2}}^{2}dt'\nonumber \\
&\ \ \ \ +\sup_{0\leq t'\leq t}\|\rho\|_{L^{3}\cap L^{\infty}\cap H^{1}}\|G\|_{H^{2}}\int_{0}^{t}\|u\|_{H^{3}}^{2}dt'\nonumber \\
&\lesssim\mathcal{E}(t)\mathcal{E}_{a}^{\frac{1}{2}}(t)+\mathcal{E}^{\frac{3}{2}}(t)\mathcal{E}_{a}^{\frac{1}{2}}(t).
\end{align}
For $I_{10}$, utilizing H\"{o}lder's inequality, Sobolev imbedding theorem and Lemma \ref{le5.2}, we also have the following estimate
\begin{align}\label{3.27}
I_{10}&\lesssim\||\nabla|^{-1}\big(\mathbb{P}(\rho u\cdot\nabla u)\big)\|_{L^{2}}\||\nabla|^{-1}\mathbb{P}(\diver  G)\|_{L^{2}}+\|\rho u\cdot\nabla u\|_{H^{1}}\|\mathbb{P}(\diver  G)\|_{H^{1}}\nonumber \\
&\lesssim\|\rho u\cdot\nabla u\|_{L^{\frac{6}{5}}}\||\nabla|^{-1}\mathbb{P}(\diver  G)\|_{L^{2}}+\Big(\|\nabla u\|_{L^{6}}\|\rho u\|_{L^{3}}+\|\nabla^{2}u\|_{L^{6}}\|\rho u\|_{L^{3}}\nonumber \\
&\ \ \ \ +\|\nabla(\rho u)\|_{L^{3}}\|\nabla u\|_{L^{6}}\Big)\|\mathbb{P}(\diver  G)\|_{H^{1}}\nonumber \\
&\lesssim\|\rho u\|_{L^{2}}\|\nabla u\|_{L^{3}}\||\nabla|^{-1}\mathbb{P}(\diver  G)\|_{L^{2}}+\Big[(\|\nabla^{2} u\|_{L^{2}}+\|\nabla^{3}u\|_{L^{2}})\| u\|_{L^{\infty}}\|\rho\|_{L^{3}}\nonumber \\
&\ \ \ \ +\|\nabla^{2} u\|_{L^{2}}(\|\rho\|_{L^{\infty}}\|\nabla u\|_{L^{3}}+\|u\|_{L^{\infty}}\|\nabla \rho\|_{L^{3}})\Big]\|\mathbb{P}(\diver  G)\|_{H^{1}}.
\end{align}
Integrating (\ref{3.27}) from $0$ to $t$, we obtain
\begin{align}\label{3.28}
\int_{0}^{t}|I_{10}(t')|dt'&\lesssim\sup_{0\leq t'\leq t}\|\rho\|_{L^{\infty}}\|u\|_{L^{2}}\int_{0}^{t}\|\nabla u\|_{H^{1}}\||\nabla|^{-1}\mathbb{P}(\diver  G)\|_{L^{2}}dt'\nonumber \\
&\ \ \ \ +\sup_{0\leq t'\leq t}\|\rho\|_{L^{\infty}\cap W^{1,3}}\|u\|_{H^{2}}\int_{0}^{t}\|u\|_{H^{3}}\||\nabla|^{-1}\mathbb{P}(\diver  G)\|_{H^{2}}dt'    \nonumber \\
&\lesssim\mathcal{E}^{\frac{3}{2}}(t)\mathcal{E}_{a}^{\frac{1}{2}}(t).
\end{align}
Combining the estimates of (\ref{3.20}), (\ref{3.22}), (\ref{3.26}), and (\ref{3.28}) together, and integrating (\ref{3.19}) from $0$ to $t$, we have

\begin{align}\label{3.29}
\mathcal{E}_{2}(t)&=\int_{0}^{t}\||\nabla|^{-1}\mathbb{P}(\diver  G)(t')\|_{H^{2}}^{2}dt'\nonumber \\
&\lesssim\mathcal{E}_{1}(t)+\mathcal{E}_{total}^{\frac{3}{2}}(t)+\mathcal{E}_{total}^{\frac{9}{4}}(t)\nonumber \\
&\lesssim\mathcal{E}_{1}(0)+\mathcal{E}_{total}^{\frac{3}{2}}(t)+\mathcal{E}_{total}^{\frac{9}{4}}(t).
\end{align}

From the estimates of (\ref{3.17}) and (\ref{3.29}), we can conclude that
\begin{align}\label{3.30}
\mathcal{E}(t)\lesssim\mathcal{E}_{1}(0)+\mathcal{E}_{total}^{\frac{3}{2}}(t)+\mathcal{E}_{total}^{\frac{9}{4}}(t).
\end{align}

Therefore, the proof of Lemma is completed.

\end{proof}

\subsection{The estimate of strongly dissipative energy $\mathcal{E}_{s}(t)$}
\ \ \ \\
In this section, we mainly focus on the estimate of strongly dissipative energy $\mathcal{E}_{s}(t)$, which is established by the following lemma.
\begin{Lemma}\label{le3.3}
Assume that $\mathcal{E}_{s}(t)$ is defined as in (\ref{1.8}). Then the following estimates is given
\begin{equation}\label{3.31}
\mathcal{E}_{s}(t)\lesssim\mathcal{E}_{s}(0)+\mathcal{E}_{s}^{\frac{3}{2}}(0)+\mathcal{E}_{total}^{\frac{3}{2}}(t)+\mathcal{E}_{total}^{\frac{9}{4}}(t)
\end{equation}
for any $t>0$.
\end{Lemma}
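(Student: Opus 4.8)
The plan is to derive a differential inequality for the strongly dissipative energy $\mathcal{E}_s(t)$ by performing weighted $L^2$ estimates on the highest-order derivatives of $(u,G)$ in the dissipative system \eqref{1.5} (or equivalently \eqref{3.7}). First I would apply $\nabla^k$ for $k=1,2$ (and $k=2,3$ for $u$) to the two evolution equations in \eqref{1.5}, take $L^2$ inner products with $(1+t)^2\nabla^k u$ and $(1+t)^2\nabla^k G$ respectively, and combine with $\diver u = 0$. The linear coupling terms $\diver G$ and $2D(u)$ cancel after integration by parts (as in the cancellation $I_1=0$ of Lemma \ref{le3.2}), leaving a parabolic dissipation $(1+t)^2\|\nabla^{k+1}u\|_{L^2}^2$ on the left and, after integrating the time-weight derivative, a term $\int 2(1+t)(\|\nabla^k u\|^2 + \|\nabla^{k-1}\mathbb{P}\diver G\|^2)dt'$ which is exactly controlled by $\mathcal{E}_w(t)$ plus data; this accounts for the $\mathcal{E}_s(0)$ and (via \eqref{1.11}) the $\mathcal{E}_{total}^{3/2}$ contributions. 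To recover the missing dissipation for $\mathbb{P}\diver G$ I would, as in Step 2 of Lemma \ref{le3.2}, multiply the velocity equation by $(\rho+1)$, apply $\mathbb{P}$ and $\nabla^k|\nabla|^{-1}$, test against $\nabla^k|\nabla|^{-1}\mathbb{P}\diver G$ with weight $(1+t)^2$, and use \eqref{3.21} together with Lemma \ref{le5.7} to rewrite $\mathbb{P}\diver(u\cdot\nabla G)$; the term $\int\nabla^k|\nabla|^{-1}u_t\cdot\nabla^k|\nabla|^{-1}\mathbb{P}\diver G$ is handled by integration by parts in time, moving the $t$-derivative onto $\mathbb{P}\diver G$ via \eqref{3.21} and paying an extra factor from $\frac{d}{dt}(1+t)^2$.

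The nonlinear terms fall into three types and I would estimate them exactly along the lines of the $I_2$–$I_{10}$ estimates already carried out. The convection terms $u\cdot\nabla u$, $u\cdot\nabla G$ and the quadratic term $Q(\nabla u,G)$ are estimated by Hölder, Sobolev embedding and the Gagliardo–Nirenberg splitting of Lemma \ref{le5.6}, peeling off $\sup_{t'}$ of a low-order weighted norm (bounded by $\mathcal{E}_s^{1/2}$ or $\mathcal{E}_w^{1/2}$) and integrating the remaining factor in time using Lemma \ref{le5.5}; this is where the time-weight bookkeeping $(1+t)^2 = (1+t')^{-\alpha}(1+t')^{2+\alpha}$ must be done carefully so that every integrand decomposes into an $L^\infty_t$-in-energy piece times an $L^1_t$ piece controlled by $\mathcal{E}_w$ or $\mathcal{E}_s$. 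The density-coefficient terms $(1-\frac{1}{\rho+1})\Delta u = g(\rho)\Delta u$ and $(1-\frac{1}{\rho+1})\diver G = g(\rho)\diver G$, and the terms involving $\mathbb{P}(\rho u_t)$ and $\mathbb{P}(\rho u\cdot\nabla u)$, are treated by writing $g(\rho)=\frac{\rho}{\rho+1}$, invoking Lemma \ref{le5.4} to replace norms of $g(\rho)$ by norms of $\rho$, and absorbing the $\rho$-factors into $\mathcal{E}_a^{1/2}(t)$; these contribute the $\mathcal{E}_{total}^{3/2}$ and, when an $\mathcal{E}_a^{1/2}$ multiplies an already-cubic term, the $\mathcal{E}_{total}^{9/4}$ terms in \eqref{3.31}.

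The main obstacle I anticipate is the $u_t$-type term together with the time-weight derivative: one cannot estimate $\|\nabla^2 u_t\|$ directly, so it must be eliminated by integration by parts in $t$, and then $\frac{d}{dt}[(1+t)^2\langle\nabla^k|\nabla|^{-1}\mathbb{P}(\rho u),\nabla^k|\nabla|^{-1}\mathbb{P}\diver G\rangle]$ produces both a $2(1+t)$-weighted cross term (which must be dominated by $\mathcal{E}_w$, hence the necessity of the $\mathcal{E}_w$-layer and of inequality \eqref{1.11}) and, through $\rho_t = -u\cdot\nabla\rho$ and \eqref{3.21}, genuinely cubic and quartic nonlinearities. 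Keeping the weight powers consistent across these integrations by parts — so that after substituting \eqref{3.21} for $\diver G_t$ the resulting $\|\nabla\mathbb{P}\diver G\|^2$-type term can actually be absorbed by the dissipation on the left with a small constant — is the delicate point; everything else is a routine but lengthy repetition of the estimates in Lemmas \ref{le3.1} and \ref{le3.2}, after which collecting all contributions and choosing the absorbing constants small yields \eqref{3.31}.
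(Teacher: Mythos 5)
Your proposal follows essentially the same route as the paper's proof: the same splitting into a direct $(1+t')^{2}$-weighted energy estimate (with the linear cancellation $J_{1}=0$ and the $2(1+t')$-weight term absorbed through $\mathcal{E}_{w}$ and \eqref{1.11}) plus a recovery of the $\nabla\mathbb{P}\diver G$ dissipation by multiplying the momentum equation by $\rho+1$, projecting, and testing against $\nabla\mathbb{P}\diver G$, with the $u_{t}$- and $\rho u_{t}$-terms handled by integration by parts in time together with \eqref{3.21}, and the density coefficients controlled via $g(\rho)=\frac{\rho}{\rho+1}$, Lemma \ref{le5.4} and $\mathcal{E}_{a}$. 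The only imprecision is that in the first step the $G$-equation should be acted on by $\nabla^{k}\mathbb{P}\diver$ and tested against $\nabla^{k}\mathbb{P}\diver G$ rather than against $\nabla^{k}G$, since it is this pairing that produces the $\|\mathbb{P}\diver G\|_{H^{1}}$ part of the energy and the exact cancellation with $\nabla^{k}\Delta u$.
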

\begin{proof}
Similar to the proof strategy of Lemma \ref{le3.2}, we also divide strong dissipative energy $\mathcal{E}_{s}(t)$ into the following two parts:
\begin{align*}
&\mathcal{E}_{s1}(t)=\sup_{0\leq t'\leq t}(1+t')^{2}\Big(\|\nabla u(t')\|_{H^{1}}^{2}+\|\mathbb{P}\diver  G(t')\|_{H^{1}}^{2}\Big)+\int_{0}^{t}(1+t')^{2}\|\nabla^{2} u(t')\|_{H^{1}}^{2}dt', \\
&\mathcal{E}_{s2}(t)=\int_{0}^{t}(1+t')^{2}\|\nabla\mathbb{P}\diver  G(t')\|_{L^{2}}^{2}dt'.
\end{align*}
\textbf{Step 1.} The estimate of $\mathcal{E}_{s1}(t)$.\\
Applying the operators $\nabla^{k+1}, \nabla^{k}\mathbb{P}{\rm div}, (k=0,1)$ to $(\ref{3.7})_{1}, (\ref{3.7})_{2}$, respectively, we have
\begin{equation}\label{3.32}
\begin{cases}
\nabla^{k+1}u_{t}-\nabla^{k+1}\Delta u+\nabla^{k+1}(u\cdot\nabla u)+\nabla^{k+1}\big[\frac{\rho}{\rho+1}(\Delta u+\diver  G)\big]+\nabla^{k+1}\nabla\tilde{p}=\nabla^{k+1}\diver  G,\\
\nabla^{k}\mathbb{P}\diver  G_{t}+\nabla^{k}\mathbb{P}\diver (u\cdot\nabla G)+\nabla^{k}\mathbb{P}\diver  Q(\nabla u, \mathbb{F}, \tilde{\rho})=\nabla^{k}\Delta u.
\end{cases}
\end{equation}
Then taking inner product with $(\nabla^{k+1}u,\nabla^{k}\mathbb{P}\diver G)$ for $(\ref{3.32})_{1}, (\ref{3.32})_{2}$, respectively. Considering the time weight $(1+t')^{2}$, we get
\begin{align}\label{3.33}
\frac{1}{2}\frac{d}{dt}\left[(1+t')^{2}(\|\nabla u(t')\|_{H^{1}}^{2}+\|\mathbb{P}\diver  G(t')\|_{H^{1}}^{2})\right]+(1+t')^{2}\|\nabla^{2}u(t')\|_{H^{1}}^{2}=J_{1}+J_{2}+J_{3}+J_{4}+J_{5}+J_{6},
\end{align}
where
\begin{align*}
&J_{1}=(1+t')^{2}\sum_{k=0}^{1}\int_{\mathbb{R}^{3}}\left(\nabla^{k+1}\diver  G\nabla^{k+1}u+\nabla^{k}\Delta u\nabla^{k}\mathbb{P}\diver  G\right)dx,\\
&J_{2}=(1+t')(\|\nabla u(t')\|_{H^{1}}^{2}+\|\mathbb{P}\diver  G(t')\|_{H^{1}}^{2}),\\
&J_{3}=-(1+t')^{2}\sum_{k=0}^{1}\int_{\mathbb{R}^{3}}\nabla^{k+1}(u\cdot\nabla u)\nabla^{k+1}u dx,\\
&J_{4}=-(1+t')^{2}\sum_{k=0}^{1}\int_{\mathbb{R}^{3}}\nabla^{k}\mathbb{P}\diver  (u\cdot\nabla G)\nabla^{k}\mathbb{P}\diver  G dx,\\
&J_{5}=-(1+t')^{2}\sum_{k=0}^{1}\int_{\mathbb{R}^{3}}\nabla^{k+1}\big[\frac{\rho}{\rho+1}(\diver  G+\Delta u)\big]\nabla^{k+1}u dx,\\
&J_{6}=-(1+t')^{2}\sum_{k=0}^{1}\int_{\mathbb{R}^{3}}\nabla^{k}\mathbb{P}\diver  Q(\nabla u,G)\nabla^{k}\mathbb{P}\diver  G dx.
\end{align*}
For $J_{1}, J_{2}, J_{3}, J_{4}$, similar to the proof of Lemma 3.3 in \cite{Zhu2018}, utilizing integration by parts and divergence free condition $\diver u=0$, and considering Lemma \ref{le5.7}, (\ref{1.11}), we can get
\begin{align}\label{3.34}
&\ \ \  J_{1}=0,\ \ \ \  \int_{0}^{t}|J_{2}(t')|dt'\lesssim\mathcal{E}^{\frac{1}{2}}(t)\mathcal{E}_{s}^{\frac{1}{2}}(t),\nonumber \\
&\int_{0}^{t}|J_{3}(t')|dt'\lesssim\mathcal{E}^{\frac{1}{2}}(t)\mathcal{E}_{s}(t), \ \ \ \ \int_{0}^{t}|J_{4}(t')|dt'\lesssim\mathcal{E}^{\frac{1}{2}}(t)\mathcal{E}_{s}(t).
\end{align}
For $J_{5}$, applying H\"{o}lder's inequality and Lemma \ref{le5.4}, we have
\begin{align}\label{3.35}
|J_{5}|&\lesssim(1+t')^{2}\Big\|\frac{\rho}{\rho+1}(\diver  G+\Delta u)\Big\|_{H^{1}}\|\nabla^{2}u\|_{H^{1}}\nonumber \\
&\lesssim(1+t')^{2}\|\rho\|_{W^{1,3}\cap L^{\infty}}\|\nabla(\diver  G+\Delta u)\|_{L^{2}}\|\nabla^{2}u\|_{H^{1}}.
\end{align}
Integrating (\ref{3.35}) from $0$ to $t$, we have
\begin{align}\label{3.36}
\int_{0}^{t}|J_{5}(t')|dt'&\lesssim\sup_{0\leq t'\leq t}\|\rho\|_{W^{1,3}\cap L^{\infty}}\int_{0}^{t}(1+t')^{2}\|\nabla(\diver  G+\Delta u)\|_{L^{2}}\|\nabla^{2}u\|_{H^{1}}dt'\nonumber \\
&\lesssim\mathcal{E}_{a}^{\frac{1}{2}}(t)\mathcal{E}_{s}(t).
\end{align}
For $J_{6}$, applying integration by parts, H\"{o}lder's inequality and $\mathbb{P}^{2}=\mathbb{P}$, we have
\begin{align}\label{3.37}
J_{6}&\lesssim(1+t')^{2}(\|Q\|_{L^{2}}\|\nabla\mathbb{P}\diver  G\|_{L^{2}}+\|\nabla^{2}Q\|_{L^{2}}\|\nabla\mathbb{P}\diver  G\|_{L^{2}})\nonumber \\
&\lesssim(1+t')^{2}\Big(\|\nabla u\|_{L^{6}}\|G\|_{L^{3}}+\|\nabla u\|_{L^{\infty}}\|\nabla^{2}G\|_{L^{2}}+\|\nabla^{2}u\|_{L^{6}}\|G\|_{W^{1,3}}+\|\nabla^{3}u\|_{L^{2}}\|G\|_{L^{\infty}}\Big)\|\nabla\mathbb{P}\diver  G\|_{L^{2}}\nonumber \\
&\lesssim\sup_{0\leq t'\leq t}(1+t')^{2}\|G\|_{W^{1,3}\cap L^{\infty}\cap\dot{H}^{2}}\|\nabla u\|_{L^{\infty}\cap H^{2}}\|\nabla\mathbb{P}\diver  G\|_{L^{2}}.
\end{align}
Integrating (\ref{3.37}) from $0$ to $t$, we have
\begin{align}\label{3.38}
\int_{0}^{t}|J_{6}(t')|dt'&\lesssim\sup_{0\leq t'\leq t}\|G(t')\|_{W^{1,3}\cap L^{\infty}\cap\dot{H}^{2}}\int_{0}^{t}(1+t')^{2}\|\nabla^{2}u\|_{H^{1}}\|\nabla\mathbb{P}\diver  G\|_{L^{2}}dt'\nonumber \\
&\lesssim\mathcal{E}_{a}^{\frac{1}{2}}(t)\mathcal{E}_{s}(t).
\end{align}
Finally, taking into account (\ref{3.34}), (\ref{3.36}), (\ref{3.38}) and (\ref{3.33}), we get
\begin{align}\label{3.39}
\mathcal{E}_{s1}(t)&=\sup_{0\leq t'\leq t}\left[(1+t')^{2}(\|\nabla u(t')\|_{H^{1}}^{2}+\|\mathbb{P}\diver  G(t')\|_{H^{1}}^{2})\right]+\int_{0}^{t}(1+t')^{2}\|\nabla^{2}u(t')\|_{H^{1}}^{2}dt'\nonumber \\
&\lesssim\mathcal{E}_{s}(0)+\mathcal{E}^{\frac{1}{2}}(t)\mathcal{E}_{s}^{\frac{1}{2}}(t)+\mathcal{E}^{\frac{3}{2}}(t)+\mathcal{E}_{s}^{\frac{3}{2}}(t)
+\mathcal{E}_{a}^{\frac{3}{2}}(t).
\end{align}

\textbf{Step 2.} The estimate of $\mathcal{E}_{s2}(t)$.\\
Multiplying the first equation of system (\ref{1.5}) by $\rho+1$,  and applying $\nabla\mathbb{P}$ on the resulting identities, we get
\begin{equation}\label{3.40}
\nabla\mathbb{P}(\rho u_{t})+\nabla u_{t}-\nabla\Delta u+\nabla\mathbb{P}(\rho u\cdot\nabla u)+\nabla\mathbb{P}(u\cdot\nabla u)=\nabla\mathbb{P}(\diver  G),
\end{equation}
Taking the inner product of (\ref{3.40}) with $\nabla\mathbb{P}(\diver  G)$, and considering the time weight $(1+t')^{2}$, we have
\begin{equation}\label{3.41}
(1+t')^{2}\|\nabla\mathbb{P}(\diver  G)\|_{L^{2}}^{2}=J_{7}+J_{8}+J_{9}+J_{10}+J_{11},
\end{equation}
where
\begin{align*}
&J_{7}=-(1+t')^{2}\int_{\mathbb{R}^{3}}\nabla\Delta u\nabla\mathbb{P}\diver  Gdx,\\
&J_{8}=(1+t')^{2}\int_{\mathbb{R}^{3}}\nabla\mathbb{P}(u\cdot\nabla u)\nabla\mathbb{P}\diver  G dx,\\
&J_{9}=(1+t')^{2}\int_{\mathbb{R}^{3}}\nabla\mathbb{P}(\rho u\cdot\nabla u)\nabla\mathbb{P}\diver  G dx,\\
&J_{10}=(1+t')^{2}\int_{\mathbb{R}^{3}}\nabla u_{t}\nabla\mathbb{P}\diver  G dx,\\
&J_{11}=(1+t')^{2}\int_{\mathbb{R}^{3}}\nabla\mathbb{P}(\rho u_{t})\nabla\mathbb{P}\diver  G dx.
\end{align*}
For $J_{7}, J_{8}$, similar to the proof of Lemma 3.3 in \cite{Zhu2018}, we can get
\begin{align}\label{3.42}
&\int_{0}^{t}|J_{7}(t')|dt'\lesssim\mathcal{E}_{s1}^{\frac{1}{2}}(t)\mathcal{E}_{s2}^{\frac{1}{2}}(t),\nonumber \\
&\int_{0}^{t}|J_{8}(t')|dt'\lesssim\mathcal{E}^{\frac{1}{2}}(t)\mathcal{E}_{s}(t).
\end{align}
For $J_{9}$, using integration by parts, H\"{o}lder's inequality and $\mathbb{P}^{2}=\mathbb{P}$, we obtain
\begin{align}\label{3.43}
|J_{9}(t')|&\lesssim(1+t')^{2}\big(\|\nabla(\rho u)\nabla u\|_{L^{2}}+\|\rho u\nabla^{2} u\|_{L^{2}}\big)\|\nabla\mathbb{P}\diver  G\|_{L^{2}}\nonumber \\
&\lesssim(1+t')^{2}\big(\|\nabla(\rho u)\|_{L^{3}}\|\nabla u\|_{L^{6}}+\|\rho u\|_{L^{3}}\|\nabla^{2} u\|_{L^{6}}\big)\|\nabla\mathbb{P}\diver  G\|_{L^{2}}\nonumber \\
&\lesssim(1+t')^{2}\big[\big(\|\rho\|_{L^{\infty}}\|u\|_{W^{1,3}}+\|u\|_{L^{\infty}}\|\rho\|_{W^{1,3}}\big)\|\nabla^{2} u\|_{H^{1}}\big]\|\nabla\mathbb{P}\diver  G\|_{L^{2}}.
\end{align}
Integrating (\ref{3.43}) from $0$ to $t$, we have
\begin{align}\label{3.44}
\int_{0}^{t}|J_{9}(t')|dt'&\lesssim\sup_{0\leq t'\leq t}\|\rho(t')\|_{W^{1,3}\cap L^{\infty}}\|u(t')\|_{W^{1,3}\cap L^{\infty}}\int_{0}^{t}(1+t')^{2}\|\nabla^{2}u\|_{H^{1}}\|\nabla\mathbb{P}\diver  G\|_{L^{2}}dt'\nonumber \\
&\lesssim\mathcal{E}_{a}^{\frac{1}{2}}(t)\mathcal{E}^{\frac{1}{2}}(t)\mathcal{E}_{s}(t).
\end{align}
For $J_{10}$, similar to the estimate of $M_{8}$ for Lemma 3.3 in \cite{Zhu2018}, applying Lemma \ref{le5.7} and (\ref{3.21}), we can get
\begin{align}\label{3.45}
\int_{0}^{t}|J_{10}(t')|dt'\lesssim\mathcal{E}_{s1}(t)+\mathcal{E}^{\frac{1}{2}}(t)\mathcal{E}_{s}^{\frac{1}{2}}(t)+\mathcal{E}^{\frac{1}{2}}(t)\mathcal{E}_{s}(t).
\end{align}
For $J_{11}$, utilizing integration by parts and considering the first equation of system (\ref{1.1}), (\ref{3.21}), we have
\begin{align}\label{3.46}
J_{11}&=\frac{d}{dt}\bigg[(1+t')^{2}\int_{\mathbb{R}^{3}}\nabla\mathbb{P}(\rho u)\nabla\mathbb{P}(\diver  G)  dx\bigg]-2(1+t')\int_{\mathbb{R}^{3}}\nabla
\mathbb{P}(\rho u)\nabla\mathbb{P}(\diver  G)  dx \nonumber \\
&\ \ \ \ -(1+t')^{2}\int_{\mathbb{R}^{3}}\nabla\mathbb{P}(\rho_{t} u)\big)\nabla\mathbb{P}(\diver  G)  dx-(1+t')^{2}\int_{\mathbb{R}^{3}}\nabla\mathbb{P}(\rho u)\big)\nabla\mathbb{P}(\diver  G_{t})  dx\nonumber \\
&=\frac{d}{dt}\bigg[(1+t')^{2}\int_{\mathbb{R}^{3}}\nabla\mathbb{P}(\rho u)\nabla\mathbb{P}(\diver  G)  dx\bigg]+J_{111}+J_{112}+J_{113},
\end{align}
where
\begin{align*}
&J_{111}=-2(1+t')\int_{\mathbb{R}^{3}}\nabla\mathbb{P}(\rho u)\nabla\mathbb{P}(\diver  G)  dx,\\
&J_{112}=(1+t')^{2}\int_{\mathbb{R}^{3}}\nabla\big[\mathbb{P}\big((u\cdot\nabla\rho) u\big)\big]\nabla\mathbb{P}(\diver  G)  dx,\\
&J_{113}=-(1+t')^{2}\int_{\mathbb{R}^{3}}\nabla\mathbb{P}(\rho u)\nabla\big[\Delta u-\mathbb{P}\diver  (u\cdot\nabla G)-\mathbb{P}\diver  Q(\nabla u, G)\big]dx.
\end{align*}
Next, applying integration by parts, H\"{o}lder's inequality, Sobolev imbedding theorem, and noting that $\mathbb{P}^{2}=\mathbb{P}$, we estimate $J_{111}$, $J_{112}$, $J_{113}$ as follows
\begin{align}\label{3.47}
|J_{111}|&\lesssim(1+t')\|\nabla(\rho u)\|_{L^{2}}\|\nabla\mathbb{P}(\diver  G)\|_{L^{2}}\nonumber \\
&\lesssim(1+t')(\|\nabla\rho\|_{L^{2}}\|u\|_{L^{\infty}}+\|\rho\|_{L^{\infty}}\|\nabla u\|_{L^{2}})\|\nabla\mathbb{P}(\diver  G)\|_{L^{2}}\nonumber \\
&\lesssim(1+t')\|\nabla\rho\|_{H^{1}}\|\nabla u\|_{H^{1}}\|\nabla\mathbb{P}(\diver  G)\|_{L^{2}},\nonumber \\
\\
|J_{112}|&\lesssim(1+t')^{2}\|\nabla\big[(u\cdot\nabla\rho) u\big]\|_{L^{2}}\|\nabla\mathbb{P}(\diver  G)\|_{L^{2}}\nonumber \\
&\lesssim(1+t')^{2}\Big(\|u\|_{L^{\infty}}\|\nabla(u\cdot\nabla\rho)\|_{L^{2}}+\|\nabla u\|_{L^{6}}\|u\|_{L^{\infty}}\|\nabla\rho\|_{L^{3}}\Big)\|\nabla\mathbb{P}(\diver  G)\|_{L^{2}}\nonumber \\
&\lesssim(1+t')^{2}\Big(\|u\|_{L^{\infty}}^{2}\|\nabla^{2}\rho\|_{L^{2}}+\|u\|_{L^{\infty}}\|\nabla^{2} u\|_{L^{2}}\|\nabla\rho\|_{L^{3}}\Big)\|\nabla\mathbb{P}(\diver  G)\|_{L^{2}}\nonumber \\
&\lesssim(1+t')^{2}\|\nabla\rho\|_{H^{1}}\|u\|_{H^{2}}\|\nabla^{2} u\|_{L^{2}}\|\nabla\mathbb{P}(\diver  G)\|_{L^{2}}.
\end{align}
For $J_{113}$, applying H\"{o}lder's and Gagliardo-Nirenberg inequalities, Lemma \ref{le5.7}, we have
\begin{align}\label{3.49}
|J_{113}|&\lesssim(1+t')^{2}\|\nabla^{2}(\rho u)\|_{L^{2}}\Big(\|\nabla^{2}u\|_{L^{2}}+\|\mathbb{P}\diver  (u\cdot\nabla G)\|_{L^{2}}+\|\diver  Q\|_{L^{2}}\Big)\nonumber \\
&\lesssim(1+t')^{2}\Big(\|u\|_{L^{\infty}}\|\nabla^{2}\rho\|_{L^{2}}+\|\nabla u\|_{L^{6}}\|\nabla \rho\|_{L^{3}}+\|\rho\|_{L^{\infty}}\|\nabla^{2}u\|_{L^{2}}\Big)\Big(\|\nabla^{2}u\|_{L^{2}}\nonumber \\
&\ \ \ \ +\|\mathbb{P}\diver  (u\cdot\nabla G)\|_{L^{2}}+\|\diver  Q\|_{L^{2}}\Big)\nonumber \\
&\lesssim(1+t')^{2}\Big(\|\nabla u\|_{L^{2}}^{\frac{1}{2}}\|\nabla^{2} u\|_{L^{2}}^{\frac{1}{2}}\|\nabla^{2}\rho\|_{L^{2}}+\|\nabla^{2} u\|_{L^{2}}\|\rho\|_{\dot{W}^{1,3}\cap L^{\infty}}\Big)\Big(\|\nabla^{2}u\|_{L^{2}}+\|u\cdot\nabla\mathbb{P}(\diver  G)\|_{L^{2}}\nonumber \\
&\ \ \ \ +\|\nabla u\cdot\nabla G\|_{L^{2}}+\|\nabla u\cdot\nabla\Delta^{-1}\diver \diver  G\|_{L^{2}}+\|\nabla^{2}u\|_{L^{2}}\|G\|_{L^{\infty}}+\|\nabla u\|_{L^{\infty}}\|\nabla G\|_{L^{2}}\Big)\nonumber \\
&\lesssim(1+t')^{2}\Big(\|\nabla u\|_{L^{2}}^{\frac{1}{2}}\|\nabla^{2} u\|_{L^{2}}^{\frac{1}{2}}\|\nabla^{2}\rho\|_{L^{2}}+\|\nabla^{2} u\|_{L^{2}}\|\rho\|_{\dot{W}^{1,3}\cap L^{\infty}}\Big)\Big(\|\nabla^{2}u\|_{L^{2}}\nonumber \\
&\ \ \ \ +\|\nabla u\|_{L^{2}}^{\frac{1}{2}}\|\nabla^{2} u\|_{L^{2}}^{\frac{1}{2}}\|\nabla\mathbb{P}(\diver  G)\|_{L^{2}}+\|\nabla^{2}u\|_{H^{1}}\|G\|_{H^{2}}\Big).
\end{align}
Substituting (\ref{3.47})--(\ref{3.49}) into (\ref{3.46}) and integrating (\ref{3.46}) from $0$ to $t$, we get
\begin{align}\label{3.50}
\int_{0}^{t}|J_{11}(t')|dt'&\lesssim\sup_{0\leq t'\leq t}(1+t')^{2}\|\nabla\rho\|_{H^{1}}\|u\|_{H^{2}}\|\nabla\mathbb{P}(\diver  G)\|_{L^{2}}\nonumber \\
&\ \ \ \ +\sup_{0\leq t'\leq t}\|\nabla\rho(t')\|_{H^{1}}\int_{0}^{t}(1+t')\|\nabla u\|_{H^{1}}\|\nabla\mathbb{P}(\diver  G)\|_{L^{2}}dt' \nonumber \\
&\ \ \ \ +\sup_{0\leq t'\leq t}\|\nabla\rho(t')\|_{H^{1}}\|u(t')\|_{H^{2}}\int_{0}^{t}(1+t')^{2}\|\nabla^{2} u\|_{L^{2}}\|\nabla\mathbb{P}(\diver  G)\|_{L^{2}}dt'\nonumber \\
&\ \ \ \ +\sup_{0\leq t'\leq t}\Big(\|\nabla^{2}\rho(t')\|_{L^{2}}+\|G(t')\|_{H^{2}}\Big)\int_{0}^{t}(1+t')^{\frac{1}{2}}\|\nabla u\|_{L^{2}}^{\frac{1}{2}}(1+t')^{\frac{3}{2}}\|\nabla^{2}u\|_{H^{1}}^{\frac{3}{2}}dt'  \nonumber \\
&\ \ \ \ +\sup_{0\leq t'\leq t}\|\nabla^{2}\rho(t')\|_{L^{2}}\|\nabla u(t')\|_{L^{2}}\int_{0}^{t}(1+t')^{2}\|\nabla^{2}u\|_{L^{2}}\|\nabla\mathbb{P}(\diver  G)\|_{L^{2}}dt'\nonumber \\
&\ \ \ \ +\sup_{0\leq t'\leq t}\Big(\|\rho(t')\|_{\dot{W}^{1,3}\cap L^{\infty}}+\|G(t')\|_{H^{2}}\Big)\int_{0}^{t}(1+t')^{2}\|\nabla^{2}u\|_{H^{1}}^{2}dt' \nonumber \\
&\ \ \ \ +\sup_{0\leq t'\leq t}\|\rho(t')\|_{\dot{W}^{1,3}\cap L^{\infty}}\|u(t')\|_{H^{2}}\int_{0}^{t}(1+t')^{2}\|\nabla^{2}u\|_{L^{2}}\|\nabla\mathbb{P}(\diver  G)\|_{L^{2}}dt'   \nonumber \\
&\lesssim\Big(\mathcal{E}_{a}^{\frac{1}{2}}(t)+\mathcal{E}^{\frac{1}{2}}(t)\Big)\mathcal{E}_{s1}(t)+\mathcal{E}_{a}^{\frac{1}{2}}(t)\mathcal{E}^{\frac{1}{2}}(t)\mathcal{E}_{s}^{\frac{1}{2}}(t)
+\Big(\mathcal{E}_{a}^{\frac{1}{2}}(t)+\mathcal{E}^{\frac{1}{2}}(t)\Big)\mathcal{E}^{\frac{1}{8}}(t)\mathcal{E}_{s}^{\frac{7}{8}}(t)\nonumber \\
&\ \ \ \ +\mathcal{E}_{a}^{\frac{1}{2}}(t)\mathcal{E}^{\frac{1}{2}}(t)\mathcal{E}_{s}(t).
\end{align}

Combining the estimates of (\ref{3.42}), (\ref{3.44}), (\ref{3.45}), and (\ref{3.50}) together, and integrating (\ref{3.41}) from $0$ to $t$, we have

\begin{align}\label{3.51}
\mathcal{E}_{s2}(t)&=\int_{0}^{t}(1+t')^{2}\|\nabla\mathbb{P}(\diver  G)\|_{L^{2}}^{2}dt'\nonumber \\
&\lesssim\mathcal{E}_{s1}(t)+\mathcal{E}^{\frac{1}{2}}(t)\mathcal{E}_{s}^{\frac{1}{2}}(t)+\mathcal{E}^{\frac{1}{2}}(t)\mathcal{E}_{s}(t)+\Big(\mathcal{E}_{a}^{\frac{1}{2}}(t)
+\mathcal{E}^{\frac{1}{2}}(t)\Big)\mathcal{E}_{s1}(t)\nonumber \\
&\ \ \ \ +\mathcal{E}_{a}^{\frac{1}{2}}(t)\mathcal{E}^{\frac{1}{2}}(t)\mathcal{E}_{s}^{\frac{1}{2}}(t)
+\Big(\mathcal{E}_{a}^{\frac{1}{2}}(t)+\mathcal{E}^{\frac{1}{2}}(t)\Big)\mathcal{E}^{\frac{1}{8}}(t)\mathcal{E}_{s}^{\frac{7}{8}}(t)
+\mathcal{E}_{a}^{\frac{1}{2}}(t)\mathcal{E}^{\frac{1}{2}}(t)\mathcal{E}_{s}(t).
\end{align}

From the estimates of (\ref{3.39}) and (\ref{3.51}), we can conclude that
\begin{align}\label{3.52}
\mathcal{E}_{s}(t)&\lesssim\Big(\mathcal{E}_{a}^{\frac{1}{2}}(t)
+\mathcal{E}^{\frac{1}{2}}(t)+1\Big)\Big(\mathcal{E}_{s}(0)+\mathcal{E}^{\frac{1}{2}}(t)\mathcal{E}_{s}^{\frac{1}{2}}(t)+\mathcal{E}^{\frac{3}{2}}(t)+\mathcal{E}_{s}^{\frac{3}{2}}(t)
+\mathcal{E}_{a}^{\frac{3}{2}}(t)\Big)+\mathcal{E}^{\frac{1}{2}}(t)\mathcal{E}_{s}^{\frac{1}{2}}(t)\nonumber \\
&\ \ \ \ +\mathcal{E}^{\frac{1}{2}}(t)\mathcal{E}_{s}(t)+\mathcal{E}_{a}^{\frac{1}{2}}(t)\mathcal{E}^{\frac{1}{2}}(t)\mathcal{E}_{s}^{\frac{1}{2}}(t)
+\Big(\mathcal{E}_{a}^{\frac{1}{2}}(t)+\mathcal{E}^{\frac{1}{2}}(t)\Big)\mathcal{E}^{\frac{1}{8}}(t)\mathcal{E}_{s}^{\frac{7}{8}}(t)
+\mathcal{E}_{a}^{\frac{1}{2}}(t)\mathcal{E}^{\frac{1}{2}}(t)\mathcal{E}_{s}(t)\nonumber \\
&\lesssim\mathcal{E}_{s}(0)+\mathcal{E}_{s}^{\frac{3}{2}}(0)+\mathcal{E}_{total}^{\frac{3}{2}}(t)+\mathcal{E}_{total}^{\frac{9}{4}}(t).
\end{align}

\end{proof}

\section{ The proof of Theorem 1.1 }\label{se4}
In this section, we mainly focus on proving Theorem 1.1. For completeness, we also record the local existence and uniqueness of the solution of the Cauchy problem \eqref{1.1}--\eqref{1.1'}, which can be established by the standard method in \cite{Chemin-Masmoudi2001}.

\begin{Proposition}
Assume that the initial data $|\nabla|^{-1} u_{0}, |\nabla|^{-1}(\tilde{\rho}_{0}-1), |\nabla|^{-1}(\mathbb{F}_{0}-\mathbb{I})\in H^{3}(\mathbb{R}^{3})$. Then there exists a constant $T_{1}>0$ such that the Cauchy problem \eqref{1.1}--\eqref{1.1'} possesses a unique solution $(u, \tilde{\rho}, \mathbb{F})$ satisfying
\begin{equation*}
\big(|\nabla|^{-1}u,|\nabla|^{\gamma_{0}}(\tilde{\rho}-1),|\nabla|^{\gamma_{0}}(\mathbb{F}-\mathbb{I})\big)\in C\big(0,T_{1};H^{3}\times H^{2-\gamma_{0}}\times H^{2-\gamma_{0}}\big).
\end{equation*}
\end{Proposition}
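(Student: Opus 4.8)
The plan is to build the solution by the classical linearization--iteration scheme of \cite{Chemin-Masmoudi2001} and then pass to the limit. Set $(\tilde{\rho}^{0},u^{0},\mathbb{F}^{0})\equiv(\tilde{\rho}_{0},u_{0},\mathbb{F}_{0})$ and, given $(\tilde{\rho}^{n},u^{n},\mathbb{F}^{n})$, define the next iterate by solving the two linear transport equations $\partial_{t}\tilde{\rho}^{n+1}+u^{n}\cdot\nabla\tilde{\rho}^{n+1}=0$, $\partial_{t}\mathbb{F}^{n+1}+u^{n}\cdot\nabla\mathbb{F}^{n+1}=\nabla u^{n}\mathbb{F}^{n}$ with data $(\tilde{\rho}_{0},\mathbb{F}_{0})$, and the linear Stokes-type problem $\tilde{\rho}^{n}\partial_{t}u^{n+1}-\mu\Delta u^{n+1}+\nabla p^{n+1}=-\tilde{\rho}^{n}u^{n}\cdot\nabla u^{n+1}+c^{2}\diver(\tilde{\rho}^{n}\mathbb{F}^{n}(\mathbb{F}^{n})^{T})$, $\diver u^{n+1}=0$, with data $u_{0}$; if one prefers, the data may first be regularized by a Friedrichs mollifier and the mollification removed at the end. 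Since $\tilde{\rho}_{0}-1\in\dot{H}^{-1}\cap H^{2}\hookrightarrow L^{\infty}$ in $\mathbb{R}^{3}$ and $\tilde{\rho}_{0}>0$, the transport maximum principle keeps every $\tilde{\rho}^{n}$ between two fixed positive constants on the whole time interval, which is what renders the last problem solvable.

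Next I would derive uniform bounds on a short interval $[0,T_{1}]$, with $T_{1}$ depending only on the norm of the data. For the transport equations, applying $\nabla^{k}$ ($0\le k\le 2$) together with $|\nabla|^{-1}$ and $|\nabla|^{\gamma_{0}}$, taking $L^{2}$ inner products, using $\diver u^{n}=0$ on the transport term and Kato--Ponce / Moser commutator estimates for $[\nabla^{k},u^{n}\cdot\nabla]$, a Gronwall argument controls $\||\nabla|^{\gamma_{0}}(\tilde{\rho}^{n+1}-1)\|_{H^{2-\gamma_{0}}}$ and $\||\nabla|^{\gamma_{0}}(\mathbb{F}^{n+1}-\mathbb{I})\|_{H^{2-\gamma_{0}}}$ by the corresponding data norm times $\exp(C\int_{0}^{t}\|u^{n}\|_{H^{3}})$, the Duhamel contribution $\nabla u^{n}\mathbb{F}^{n}$ being absorbed the same way. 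For $u^{n+1}$, one applies $\nabla^{k}$ or $\nabla^{k}|\nabla|^{-1}$ ($0\le k\le 3$) and tests with the corresponding derivative of $u^{n+1}$: the constraint $\diver u^{n+1}=0$ removes the pressure; commuting $\tilde{\rho}^{n}$ past the derivatives costs terms controlled by $\nabla\tilde{\rho}^{n}$ (and by $\partial_{t}\tilde{\rho}^{n}=-u^{n}\cdot\nabla\tilde{\rho}^{n}$ for the time derivative), so the density-weighted energy $\int\tilde{\rho}^{n}|\nabla^{k}u^{n+1}|^{2}$ closes thanks to the two-sided bound on $\tilde{\rho}^{n}$; the $-\mu\Delta u^{n+1}$ term produces, after integration by parts, the dissipation $\mu\|\nabla u^{n+1}\|_{L^{2}}^{2}$, and the remaining nonlinear and elastic terms are bounded by Sobolev embedding and the transport estimates just obtained. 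Choosing $T_{1}$ small makes a ball in the product space of the Proposition stable under the iteration map.

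The final step is convergence and uniqueness. Writing $\delta\tilde{\rho}^{n}:=\tilde{\rho}^{n+1}-\tilde{\rho}^{n}$, and similarly $\delta u^{n},\delta\mathbb{F}^{n}$, the difference of consecutive transport equations carries the sources $-\delta u^{n-1}\cdot\nabla\tilde{\rho}^{n}$ and $\nabla\delta u^{n-1}\mathbb{F}^{n}+\nabla u^{n-1}\delta\mathbb{F}^{n-1}$, each costing one derivative; accordingly I would estimate $\delta u^{n}$ in $L^{\infty}_{t}L^{2}\cap L^{2}_{t}\dot{H}^{1}$ and $\delta\tilde{\rho}^{n},\delta\mathbb{F}^{n}$ in $L^{\infty}_{t}\dot{H}^{-1}$, which after shrinking $T_{1}$ if necessary yields a contraction, hence strong convergence of the whole sequence in this weaker norm. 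Interpolating against the uniform bounds and using weak-$*$ compactness identifies the limit $(\tilde{\rho},u,\mathbb{F})$ as a solution of \eqref{1.1}--\eqref{1.1'} in the asserted class; strong continuity in time is recovered from classical transport theory for $(\tilde{\rho},\mathbb{F})$ and the parabolic structure for $u$, a Bona--Smith regularization upgrading weak to strong continuity where needed. Uniqueness follows from the very same difference estimate applied to two solutions with identical data.

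The step I expect to be the main obstacle is the a priori estimate for the linear Stokes system with the non-constant coefficient $\tilde{\rho}^{n}$ multiplying $\partial_{t}u^{n+1}$: one cannot invoke the heat semigroup directly, and differentiating the equation generates terms such as $\nabla\tilde{\rho}^{n}\partial_{t}u^{n+1}$ that are not obviously dominated by the dissipation. This is handled exactly as in \cite{Chemin-Masmoudi2001}, by working throughout with density-weighted energies together with the uniform two-sided bound on $\tilde{\rho}^{n}$ so that the parabolic gain survives; the extra unknown $\mathbb{F}$ and the elastic source $\diver(\tilde{\rho}\mathbb{F}\mathbb{F}^{T})=\diver G$, with $G$ as in \eqref{1.2}, enter only as lower-order perturbations absorbed by the same scheme. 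A secondary point is the propagation of the low-frequency components $|\nabla|^{-1}u$, $|\nabla|^{\gamma_{0}}(\tilde{\rho}-1)$, $|\nabla|^{\gamma_{0}}(\mathbb{F}-\mathbb{I})$: here one uses that, since $\diver u=0$, every nonlinearity in the momentum equation is in divergence form so that $|\nabla|^{-1}$ costs no regularity, while the $\dot{H}^{\gamma_{0}}$ transport bounds for $\tilde{\rho}$ and $\mathbb{F}$ are obtained by exactly the commutator argument already used in the proof of Lemma \ref{le3.1}.
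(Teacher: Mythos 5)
Your proposal is correct and follows exactly the route the paper itself invokes: the paper offers no proof of this Proposition beyond the remark that it "can be established by the standard method in [Chemin--Masmoudi]", and your linearization--iteration scheme with density-weighted energies, contraction in a norm one derivative weaker, and propagation of the $|\nabla|^{-1}$ and $|\nabla|^{\gamma_{0}}$ regularities is precisely that standard method adapted to the present system. The only points worth tightening in a written version are that the positive lower bound on $\tilde{\rho}_{0}$ should be made an explicit hypothesis (it follows from $\tilde{\rho}_{0}>0$, continuity, and decay to $1$ at infinity), and that the low-frequency bound $\||\nabla|^{-1}(fg)\|_{L^{2}}\lesssim\|fg\|_{L^{6/5}}$ of Lemma \ref{le5.2} is what replaces the loose "divergence form" remark for the non-divergence terms such as $\tilde{\rho}\,\partial_{t}u$.
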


Next, from the total energy $\mathcal{E}_{total}(t)=\mathcal{E}(t)+\mathcal{E}_{w}(t)+\mathcal{E}_{s}(t)+\mathcal{E}_{a}(t)$, and considering the estimates of Lemma \ref{le3.1}, Lemma \ref{le3.2}, Lemma \ref{le3.3} and (\ref{1.11}), using Young's inequality, we can choose some constant $C_{0}$ such that
\begin{equation}\label{4.1}
\mathcal{E}_{total}(t)\leq C_{0}\mathcal{E}_{total}(0)+C_{0}\mathcal{E}_{total}^{\frac{3}{2}}(0)+C_{0}\mathcal{E}_{total}^{\frac{3}{2}}(t)+C_{0}\mathcal{E}_{total}^{\frac{9}{4}}(t).
\end{equation}

Applying the assumptions of initial data in Theorem \ref{th1.1}, we can choose the constant $\varepsilon>0$ to be sufficiently small so that
\begin{equation*}
C_{0}\mathcal{E}_{total}(0)\leq\frac{\varepsilon}{2}.
\end{equation*}
From the existence of local solution in Proposition \ref{4.1} and the standard energy method, there exists a time $T>0$ such that
\begin{equation}\label{4.2}
\mathcal{E}_{total}(t)\leq \varepsilon,\ \ \  \forall \ \ t\in[0,T].
\end{equation}
Let $T_{max}$ is the lifespan of solutions to (\ref{1.1}) by
\begin{equation*}
T_{max}:=\sup\left\{t: \sup_{0\leq s\leq t}\mathcal{E}_{total}(t)\leq \varepsilon\right\}.
\end{equation*}
Combining the continuation argument and $\varepsilon$ is small enough, from (\ref{4.1}), we can conclude that $T_{max}=\infty$.
Therefore, the proof of Theorem 1.1 is completed.

\appendix
\renewcommand{\appendixname}{Appendix~\Alph{section}}

\section{Derivation of models}

\

In the appendix, we derive the incompressible inhomogeneous viscoelastic equations \eqref{1.1}. Note that the mass conservation equation $\eqref{1.1}_1$ is well-known under the condition of the incompressibility
\begin{align}\label{incom-Euler}
\diver u=0.
\end{align}
Moreover, the equation $\eqref{1.1}_3$ for the deformation gradient can be obtained, see \cite{Liu-Walkington2001} for instance. Therefore, it remains to derive the motion equation $\eqref{1.1}_2$. For this purpose, we adopt an energetic variational approach \cite{Eisenberg-Hyon-Liu2010,Arnold1989,Giga-Kirshtein-Liu2018,Xu-Sheng-Liu2014} which was developed through the seminal works \cite{Onsager-1931,Onsager1931,Strutt1873}. The energetic variational approach includes two basic variational principles, that is, Maximum Dissipation Principle (MDP) and Least Action Principle (LAP). The former MDP gives the dissipative force and the latter LAP provides the conservative force. Thus, the force balance, namely, the dissipative force is equal to the conservative force, leads to the motion equation. To do variations mentioned above, we need to set some appropriate functionals in view of the flow map $x(X,t)$ and the velocity $u(x,t)$. For this reason, we first introduce the flow map $x(X,t)$.

For an appropriately smooth velocity field $u(x(X,t),t)$, the flow map $x(X,t)$ can be uniquely determined by the following initial value problem:
\begin{align*}
\begin{cases}
\displaystyle\frac{d}{dt}x(X,t)=u(x(X,t),t),\quad t>0,\\
x(X,0)=X,
\end{cases}
\end{align*}
where $X$, $x\in\Omega\subseteq\r3$ denotes the Lagrangian coordinate and Eulerian coordinate, respectively.

We recall an application of the Helmholtz-Weyl's decomposition.

\begin{Lemma}\label{lm3.1-231227}
If a vector field $w\in L^2$ is orthogonal to all smooth divergence free vector fields with compact support, then $w$ has gradient form, i.e., $w=\na p$ for some $p\in H^1$.
\end{Lemma}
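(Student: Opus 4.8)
The plan is to invoke the de Rham / Helmholtz--Weyl characterization of gradients: a distribution that annihilates every divergence-free test field is itself a gradient. Write $\mathcal{V}=\{\varphi\in C_c^\infty(\Omega)^3:\diver\varphi=0\}$, so that the hypothesis reads $\int_\Omega w\cdot\varphi\,dx=0$ for all $\varphi\in\mathcal{V}$. First I would note that this orthogonality persists for every $\varphi$ in the $L^2$-closure of $\mathcal{V}$ by density; equivalently, $w$ lies in the $L^2$-orthogonal complement of the solenoidal subspace, which is precisely the subspace that the Helmholtz decomposition identifies with gradients. This reduces the lemma to producing the potential $p$ together with its regularity.

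For an explicit construction on $\Omega=\r3$ I would pass to the Fourier side. Since for each fixed $\xi$ the divergence-free fields are $\{\eta:\xi\cdot\eta=0\}$, the orthogonality forces $\mathscr{F}w(\xi)$ to be parallel to $\xi$ for a.e.\ $\xi$, so that $\mathscr{F}w(\xi)=i\xi\,\mathscr{F}p(\xi)$ with $\mathscr{F}p(\xi):=-i\,\xi\cdot\mathscr{F}w(\xi)/|\xi|^2$. Setting $p:=\mathscr{F}^{-1}(\mathscr{F}p)$ gives $\na p=w$, and $w\in L^2$ immediately yields $\na p\in L^2$; the only delicate point is the behaviour of $\mathscr{F}p$ near $\xi=0$, which is governed by the low-frequency integrability of $w$ and places $p$ in $H^1$ (after, if necessary, subtracting an affine function so that the $L^2$ part is finite). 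Alternatively, and without structural hypotheses on $\Omega$, one invokes de Rham's theorem (as in Temam's monograph on the Navier--Stokes equations): a vector distribution vanishing on $\mathcal{V}$ equals $\na p$ for some scalar distribution $p$, and then $\na p=w\in L^2$ together with the Ne\v{c}as inequality upgrades $p$ to $H^1_{loc}$, hence to $H^1$ on bounded components of $\Omega$.

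The main obstacle is not the existence of $p$ --- that follows from the soft de Rham or Fourier argument --- but the claimed integrability $p\in H^1$: the orthogonality condition determines only $\na p$, so recovering $p$ itself in $L^2$ requires either boundedness of the domain (Poincar\'e, up to the additive constant) or a decay input on $w$ at low frequencies. I would handle this via the Ne\v{c}as/Bogovskii estimate locally and remark that in the derivation of \eqref{1.1} the lemma is applied only to pin down the pressure up to a constant, so it is $\na p$ that matters. A minor remaining step is the density of $\mathcal{V}$ in the solenoidal $L^2$-space, which is classical and used only to pass from test fields to the full orthogonal complement.
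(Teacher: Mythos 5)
Your proposal is correct and follows essentially the same route as the paper, whose ``proof'' is simply a citation of the classical Helmholtz--Weyl/de Rham characterization (Dautray--Lions, Corollary 3, p.~217); your Fourier-side computation and the de Rham/Ne\v{c}as argument are exactly the standard proofs behind that citation. You also rightly identify the one delicate point --- the orthogonality only pins down $\nabla p\in L^2$, so upgrading $p$ itself to $H^1$ needs a Poincar\'e-type input on a bounded domain or a low-frequency condition on $\mathbb{R}^3$ --- which is consistent with how the lemma is used in Appendix~A, where only $\nabla p$ enters the force balance.
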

\begin{proof}
See \cite[Corollary 3 (p. 217)]{Dautray-Lions1990}.
\end{proof}

For incompressible inhomogeneous viscoelastic fluids, the total energy should contain the kinetic energy and the Helmholtz free energy. So, we start with the following energy dissipation law:
\begin{align}\label{energy}
\frac{d}{dt}E^{\mbox{total}}&:=\frac{d}{dt}\int_{\Omega}\left(\frac12\tilde{\rho}|u|^2
+\omega(\tilde{\rho})+\frac {1}{2}c^2\tilde{\rho}|\mathbb{F}|^2\right)\,dx\nonumber\\
&=-\int_{\Omega}\mu|\na u|^2\,dx:=-\triangle,
\end{align}
where $E^{\mbox{total}}$ and $\triangle$ denote the total energy and the entropy production, respectively.
By the Maximum Dissipation Principle \cite{Forster2013}, taking the variation (for any smooth $ \tilde{u}$ with compact support satisfying $\diver \tilde{u}=0$) with respect to $u$ yields
\begin{align*}
0&=\left.\frac{d}{d\varepsilon}\right|_{\varepsilon=0}\frac12\triangle(u+\varepsilon \tilde{u})=\left.\frac{d}{d\varepsilon}\right|_{\varepsilon=0}\frac12\int_{\Omega}\mu|\na u+\va\na  \tilde{u}|^2\,dx,\nonumber\\
&=\int_{\Omega}\mu\na u:\na  \tilde{u}\,dx=\int_{\Omega}\left(-\mu\Delta u\right)\cdot \tilde{u}\,dx.
\end{align*}
Since $\tilde{u}$ is arbitrary and $\diver\tilde{u}=0$, by Lemma \ref{lm3.1-231227}, we obtain for some $p_1\in H^1$,
\begin{align}\label{A2}
-\mu\Delta u&=\na p_1,\nonumber\\
\mbox{and}\ F_{dissipative}&=\frac{\de(\tfrac12\triangle)}{\de u}=-\mu\De u-\na p_1,
\end{align}
where $F_{dissipative}$ denotes the dissipative force.

Note that between Eulerian coordinates $x$ and Lagrangian coordinates $X$, see \cite{Forster2013}, it holds that
\begin{align}\label{A-5}
\tilde{\rho}(x(X,t),t)=\frac{\tilde{\rho}_0(X)}{\det(\widetilde{\mathbb{F}})}\quad \mbox{and}\quad \widetilde{\mathbb{F}}(X,t)=\mathbb{F}(x(X,t),t)=\frac{\pa x(X,t)}{\pa X}.
\end{align}
Thus the incompressibility in Lagrangian coordinates reads as
\begin{align}\label{incom-Lagrange}
\det(\widetilde{\mathbb{F}}(X,t))=1,
\end{align}
which is equivalent with \eqref{incom-Euler} in Eulerian coordinates if $x(X,0)=X$.

Given the total energy $E^{\mbox{total}}$ in \eqref{energy}, by \eqref{incom-Lagrange}, we set the action functional:
\begin{align*}
\mathcal{A}(x(X,t))&:=\int_0^{t^\ast}\int_{\Omega}
\left(\frac{1}{2}\tilde{\rho}|u|^2-\omega(\tilde{\rho})-\frac{1}{2}c^2\tilde{\rho}|\mathbb{F}|^2\right)\,dxdt\\
&=\int_0^{t^\ast}\int_{\Omega} \left(\frac12\tilde{\rho}_0(X)|x_t|^2-\omega(\tilde{\rho}_0(X))-\frac {1}{2}c^2\tilde{\rho}_0(X)|\widetilde{\mathbb{F}}|^2\right)\,dXdt.
\end{align*}
Then, the conservative force is
\begin{align}\label{A4}
F_{conservative}=\frac{\de\mathcal{A}}{\de x}.
\end{align}

By the Least Action Principle \cite{Forster2013}, taking the variation (for any smooth $y(X,t)=\tilde{y}(x(X,t),t)$ with compact support and $\diver_{x}\tilde{y}=0$) with respect to the flow map $x$ yields
\begin{align*}
0&=\left.\frac{d}{d\va}\right|_{\va=0}\mathcal{A}(x(X,t)+\va y(X,t))\nonumber\\
&=\left.\frac{d}{d\va}\right|_{\va=0}\int_0^{t^\ast}\int_{\Omega}\left(\frac12\tilde{\rho}_0(X)|x_t(X,t)+\va y_t(X,t)|^2-\frac{1}{2}c^2\tilde{\rho}_0(X)\left|\frac{\pa x(X,t)}{\pa X}+\va\frac{\pa y(X,t)}{\pa X}\right|^2\right)\,dXdt\nonumber\\
&=\int_0^{t^\ast}\int_\Omega \tilde{\rho}_0(X)x_t\cdot y_t\,dXdt-\int_0^{t^\ast}\int_\Omega c^2 \tilde{\rho}_0(X) \widetilde{\mathbb{F}}:(\na_{X}y)\,dXdt\nonumber\\
&=\int_0^{t^\ast}\int_\Omega \tilde{\rho} u\cdot \frac{d}{dt}\tilde{y}\,dxdt-\int_0^{t^\ast}\int_\Omega c^2 \tilde{\rho} \mathbb{F}:(\na_{x}\tilde{y}\mathbb{F})\,dxdt\nonumber\\
&=\int_0^{t^\ast}\int_\Omega \tilde{\rho} u\cdot(\tilde{y}_t+u\cdot\na\tilde{y})\,dxdt-\int_0^{t^\ast}\int_\Omega c^2 \tilde{\rho} \mathbb{F}\mathbb{F}^T:\na_{x}\tilde{y}\,dxdt\nonumber\\
&=\int_0^{t^\ast}\int_{\Omega}\big[-(\tilde{\rho} u)_t-\diver(\tilde{\rho} u\otimes u)+c^2\diver(\tilde{\rho} \mathbb{F}\mathbb{F}^T)\big]\tilde{y}\,dxdt.
\end{align*}
Since $\tilde{y}$ is arbitrary and $\diver_{x}\tilde{y}=0$, by Lemma \ref{lm3.1-231227} and \eqref{incom-Euler}, we obtain for some $p_2\in H^1$,
\begin{align}\label{A5}
-&\tilde{\rho} u_t-\tilde{\rho} u\cdot\na u+c^2\diver(\tilde{\rho} \mathbb{F}\mathbb{F}^T)=\na p_2\nonumber\\
&\Rightarrow F_{conservative}=\frac{\de\mathcal{A}}{\de x}=-\tilde{\rho} u_t-\tilde{\rho} u\cdot\na u+c^2\diver(\tilde{\rho} \mathbb{F}\mathbb{F}^T).
\end{align}
By \eqref{A2} and \eqref{A5}, the total force balance gives
\begin{align*}
&F_{conservative}=F_{dissipative}\\
&\Rightarrow\tilde{\rho} u_t+\tilde{\rho} u\cdot\na u-c^2\diver(\tilde{\rho} \mathbb{F}\mathbb{F}^T)+\na p=\mu\De u,
\end{align*}
where $p:=p_2-p_1$. Hence, we obtain the motion equation $\eqref{1.1}_2$. On the other hand, we can go back the energy dissipation law \eqref{energy} by multiplying $\eqref{1.1}_1$, $\eqref{1.1}_2$ and $\eqref{1.1}_3$ by $\omega'(\tilde{\rho}),u$ and $c^2\tilde{\rho} \mathbb{F}$, respectively, summing them up and then integrating over $\Omega$.

\section{Tools}\label{se5}

In the appendix, we state some useful results that have been frequently used in the previous sections.

The following is the general Gagliardo-Nirenberg inequality:

\begin{Lemma}\label{le5.1}
Let   $0\leq m,\alpha\leq l$, then we have
\begin{equation}\label{5.1}
\|\nabla^{\alpha}f\|_{L^p}\lesssim \|\nabla^{m}f\|_{L^q}^{1-\theta}\|\nabla^{l}f\|_{L^r}^{\theta},
\end{equation}
where $0\leq\theta\leq1$ and $\alpha$ satisfies
\begin{equation*}
  \frac{\alpha}{3}-\frac1p=\left(\frac{m}{3}-\frac{1}{q}\right)(1-\theta)+\left(\frac{l}{3}-\frac{1}{r}\right)\theta.
\end{equation*}
Here, when $p=\infty$, we require that $0<\theta<1$.
\end{Lemma}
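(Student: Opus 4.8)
**The plan is to prove the general Gagliardo–Nirenberg inequality (Lemma \ref{le5.1}) by reducing it to the classical scale-invariant case through a combination of Littlewood–Paley decomposition (or, alternatively, Fourier-analytic estimates) and Sobolev embeddings, together with a dimensional-analysis check fixing the admissible exponent relation.**

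First I would record the scaling heuristic: if $f_\lambda(x):=f(\lambda x)$, then $\|\nabla^\alpha f_\lambda\|_{L^p}=\lambda^{\alpha-3/p}\|\nabla^\alpha f\|_{L^p}$ (interpreting $\nabla^\alpha$ as $\Lambda^\alpha$ when $\alpha$ is not a nonnegative integer), and similarly for the two factors on the right. Demanding that the claimed inequality be invariant under $f\mapsto f_\lambda$ forces exactly the stated relation
\[
\frac{\alpha}{3}-\frac1p=\Big(\frac{m}{3}-\frac1q\Big)(1-\theta)+\Big(\frac{l}{3}-\frac1r\Big)\theta .
\]
This both explains where the exponent condition comes from and shows it is necessary; the content of the lemma is its sufficiency.

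Next I would prove sufficiency. The cleanest route is Littlewood–Paley: write $f=\sum_{j\in\mathbb{Z}}\Delta_j f$ in homogeneous dyadic blocks. For each block, Bernstein's inequalities give $\|\nabla^\alpha \Delta_j f\|_{L^p}\lesssim 2^{j(\alpha-3/p)}\,2^{j(3/2)}\|\Delta_j f\|_{L^2}$ and analogous bounds relating $\|\nabla^m\Delta_j f\|_{L^q}$ and $\|\nabla^l\Delta_j f\|_{L^r}$ to $2^{j\sigma}\|\Delta_j f\|_{L^2}$ for the appropriate shifted exponent $\sigma$; the exponent relation guarantees the three scaling exponents line up so that one can interpolate blockwise as $a_j\lesssim b_j^{1-\theta}c_j^{\theta}$ with $b_j,c_j$ the low/high contributions. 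Summing in $j$ and applying Hölder's inequality in the summation index (with conjugate exponents $\frac1{1-\theta}$ and $\frac1\theta$) yields
\[
\|\nabla^\alpha f\|_{L^p}\lesssim\Big(\sum_j b_j\Big)^{1-\theta}\Big(\sum_j c_j\Big)^{\theta}\lesssim\|\nabla^m f\|_{L^q}^{1-\theta}\|\nabla^l f\|_{L^r}^{\theta},
\]
after re-identifying the dyadic sums with the stated norms via standard Besov–Sobolev characterizations. The endpoint $\theta\in\{0,1\}$ is then just a pure Sobolev embedding $\dot W^{m,q}\hookrightarrow \dot W^{\alpha,p}$ (respectively $\dot W^{l,r}\hookrightarrow \dot W^{\alpha,p}$), valid precisely when the exponent relation holds with the corresponding $\theta$; and the restriction $0<\theta<1$ when $p=\infty$ reflects the failure of the $L^\infty$ endpoint of the Sobolev embedding, which is why one needs a genuine interpolation there rather than a pure embedding.

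The main obstacle will be handling the cases where $p$, $q$, or $r$ equals $\infty$ (and, relatedly, the logarithmic-type failures at critical exponents): in those regimes Bernstein's inequality and the block interpolation must be applied with care, and one cannot simply sum $\|\Delta_j f\|_{L^\infty}$ naively. I would address this exactly as in the standard references — restricting to $0<\theta<1$ when $p=\infty$ as the hypothesis already stipulates, using $\ell^1$-type summation of the dyadic pieces with a small geometric gain coming from the strict gap in scaling exponents (which is automatic once $\theta\notin\{0,1\}$), and invoking the Bernstein inequalities for $L^\infty$ blocks. Since this is a well-known inequality, I would in fact simply cite a standard source (e.g. \cite{Bahouri-Chemin-Danchin2011}) for the full argument rather than reproduce every estimate, noting that the dimension here is $n=3$ and all the exponent bookkeeping specializes accordingly.
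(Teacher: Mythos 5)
The paper itself gives no argument for this lemma: its ``proof'' is a bare citation to Nirenberg's 1959 theorem, whose original proof is an elementary real-variable one. So your scaling computation (which correctly identifies the exponent relation as the necessary dimensional balance) together with a Littlewood--Paley/Bernstein argument is a genuinely different, and perfectly standard, route; and since you ultimately propose to cite a standard reference such as \cite{Bahouri-Chemin-Danchin2011}, your overall plan is acceptable.

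That said, the one piece of actual argument you display has a gap. After the blockwise interpolation you apply H\"older in $j$ and then claim $\sum_j b_j\lesssim\|\nabla^m f\|_{L^q}$ and $\sum_j c_j\lesssim\|\nabla^l f\|_{L^r}$ ``via standard Besov--Sobolev characterizations.'' Those $\ell^1$-in-$j$ sums of block norms are homogeneous Besov norms of type $\dot B^{0}_{q,1}$ and $\dot B^{0}_{r,1}$, and they are \emph{not} controlled by the corresponding Lebesgue norms: $L^q$ only controls $\sup_j\|\Delta_j g\|_{L^q}$, i.e.\ $\dot B^{0}_{q,\infty}$. The standard repair --- which your closing remark about a ``small geometric gain'' gestures at but the displayed chain does not implement --- is to use only $\sup_j\|\Delta_j\nabla^m f\|_{L^q}\lesssim\|\nabla^m f\|_{L^q}$ and $\sup_j\|\Delta_j\nabla^l f\|_{L^r}\lesssim\|\nabla^l f\|_{L^r}$, bound $\|\nabla^\alpha\Delta_j f\|_{L^p}$ by Bernstein in two ways, giving a factor $2^{\delta_1 j}$ times the $m$-quantity and a factor $2^{-\delta_2 j}$ times the $l$-quantity with $\delta_1,\delta_2>0$ exactly when $0<\theta<1$ (this is where the exponent relation and the $p=\infty$ restriction enter), split the sum over $j$ at a threshold $N$, sum the two geometric series, and optimize over $N$; the endpoints $\theta\in\{0,1\}$ are then pure Sobolev embeddings, as you say. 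With that correction (or by simply deferring entirely to Nirenberg or Bahouri--Chemin--Danchin, as the paper does), your proof is fine.
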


\begin{proof}
See Theorem (p. 125) in \cite{Nirenberg1959}.
\end{proof}

If $s\in [0,\frac{3}{2})$, we can infer from the Hardy-Littlewood-Sobolev theorem that the following $L^p$-type inequality holds:
\begin{Lemma}\label{le5.2}
Let $0\leq s<\frac{3}{2}, 1<p\leq2$ with $\frac{1}{2}+\frac{s}{3}=\frac{1}{p}$, it holds that
\begin{equation*}
\|f\|_{\dot{H}^{-s}}\lesssim\|f\|_{L^{p}}.
\end{equation*}
\end{Lemma}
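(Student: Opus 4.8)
The plan is to reduce the claimed bound to the classical Hardy--Littlewood--Sobolev (HLS) inequality for the Riesz potential, exactly as anticipated in the sentence preceding the statement. The case $s=0$ is immediate, since then $p=2$ and $\|f\|_{\dot{H}^{0}}=\|f\|_{L^2}$ by Plancherel's theorem; so assume $0<s<\tfrac32$, and note that this range is precisely what guarantees $1<p<2$ through the relation $\tfrac1p=\tfrac12+\tfrac s3$.

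First I would use Plancherel's theorem to write $\|f\|_{\dot{H}^{-s}}=\big\|\,|\xi|^{-s}\mathscr{F}f\,\big\|_{L^2}=\|\Lambda^{-s}f\|_{L^2}$. Next I would identify, on $\mathbb{R}^{3}$, the operator $\Lambda^{-s}=(-\Delta)^{-s/2}$ with convolution against the Riesz kernel: $\Lambda^{-s}f=I_s f:=c_{s}\,|\cdot|^{s-3}\ast f$. This identity is first checked for $f$ in the Schwartz class by comparing Fourier transforms, using the tempered-distribution identity $\mathscr{F}\big(|x|^{s-3}\big)=c_s'\,|\xi|^{-s}$, which is valid since $0<s<3$. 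Then HLS states that for $0<s<3$ and exponents $1<p<q<\infty$ with $\tfrac1q=\tfrac1p-\tfrac s3$ one has $\|I_s f\|_{L^q}\lesssim\|f\|_{L^p}$; taking $q=2$ forces exactly $\tfrac1p=\tfrac12+\tfrac s3$, the hypothesis of the lemma, and the admissibility $1<p$ and $q=2<\infty$ holds because $0<s<\tfrac32$. Chaining these gives $\|f\|_{\dot{H}^{-s}}=\|I_sf\|_{L^2}\lesssim\|f\|_{L^p}$, and a density argument extends it from Schwartz functions to all $f\in L^p$ for which the left-hand side is finite. Equivalently, one may dualize: $\|f\|_{\dot{H}^{-s}}=\sup\{\langle f,g\rangle:\|g\|_{\dot{H}^{s}}\le1\}$ and invoke the homogeneous Sobolev embedding $\dot{H}^{s}(\mathbb{R}^{3})\hookrightarrow L^{6/(3-2s)}$ with $\tfrac{6}{3-2s}=p'$, the Hölder conjugate of $p$; but this embedding is itself a consequence of HLS, so the two routes are the same in substance.

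There is essentially no serious obstacle here: the only point demanding a little care is the rigorous identification $\Lambda^{-s}f=I_s f$ at the level of tempered distributions, together with the attendant approximation argument, since the kernel $|x|^{s-3}$ is merely locally integrable and the Fourier-side definition of $\Lambda^{-s}$ used throughout the paper must be matched with the physical-side Riesz potential. Once this is in place, the estimate follows at once from Hardy--Littlewood--Sobolev.
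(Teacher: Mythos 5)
Your proof is correct and follows essentially the same route as the paper, which simply cites the Hardy--Littlewood--Sobolev fractional integration theorem (Theorem 1, p.~119 of Stein's book); you merely spell out the standard reduction $\|f\|_{\dot H^{-s}}=\|I_sf\|_{L^2}\lesssim\|f\|_{L^p}$ that the citation encapsulates. No issues.
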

\begin{proof}
See Theorem 1 (p. 119) in \cite{Stein1970}.
\end{proof}

In our arguments, we also need to use the following special Sobolev interpolation:
\begin{Lemma}\label{le5.3}
Let $n=3$, $s\geq0$, and $l\geq0$, then we have
\begin{equation}\label{5.2'}
  \|\nabla^lf\|_{L^2}\leq \|\nabla^{l+1}f\|_{L^2}^{1-\theta}\|f\|_{\dot{H}^{-s}}^{\theta},
\end{equation}
where $\theta=\frac{1}{l+1+s}$.
\end{Lemma}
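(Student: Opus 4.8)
The final statement to prove is Lemma~\ref{le5.3}, the Sobolev interpolation inequality
\[
\|\nabla^l f\|_{L^2}\le \|\nabla^{l+1}f\|_{L^2}^{1-\theta}\|f\|_{\dot H^{-s}}^{\theta},\qquad \theta=\frac{1}{l+1+s},
\]
for $n=3$, $s\ge 0$, $l\ge 0$. The plan is to argue entirely on the Fourier side. First I would write the three quantities via Plancherel: $\|\nabla^l f\|_{L^2}^2=\int |\xi|^{2l}|\hat f(\xi)|^2\,d\xi$, $\|\nabla^{l+1}f\|_{L^2}^2=\int|\xi|^{2(l+1)}|\hat f(\xi)|^2\,d\xi$, and $\|f\|_{\dot H^{-s}}^2=\int|\xi|^{-2s}|\hat f(\xi)|^2\,d\xi$. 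The elementary pointwise identity that drives everything is $|\xi|^{2l}=\bigl(|\xi|^{2(l+1)}\bigr)^{1-\theta}\bigl(|\xi|^{-2s}\bigr)^{\theta}$, which holds precisely when $(l+1)(1-\theta)-s\theta=l$, i.e. $l+1-\theta(l+1+s)=l$, i.e. $\theta=\frac{1}{l+1+s}$; this explains the stated exponent.

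Next I would split the frequency domain $\mathbb R^3$ into $d\nu(\xi):=|\xi|^{2(l+1)}|\hat f(\xi)|^2\,d\xi$ as a (finite, after we note both integrals are finite) base measure and write
\[
\|\nabla^l f\|_{L^2}^2=\int |\xi|^{-2-2s}\,d\nu(\xi),
\]
using $|\xi|^{2l}=|\xi|^{2(l+1)}\cdot|\xi|^{-2-2s}\cdot|\xi|^{2s}\cdot|\xi|^{-2s}$ — more cleanly, apply Hölder's inequality in the form $\int gh\,d\xi\le\bigl(\int g^{1/(1-\theta)}\bigr)^{1-\theta}\bigl(\int h^{1/\theta}\bigr)^{\theta}$ with $g=|\xi|^{2l(1-\theta)}|\hat f|^{2(1-\theta)}$ chosen so that $g^{1/(1-\theta)}=|\xi|^{2l/?}\cdots$. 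The slick route is: apply Hölder with exponents $\frac{1}{1-\theta}$ and $\frac{1}{\theta}$ to
\[
\int |\xi|^{2l}|\hat f|^2\,d\xi=\int\Bigl(|\xi|^{2(l+1)}|\hat f|^2\Bigr)^{1-\theta}\Bigl(|\xi|^{-2s}|\hat f|^2\Bigr)^{\theta}\,d\xi
\le\Bigl(\int|\xi|^{2(l+1)}|\hat f|^2\,d\xi\Bigr)^{1-\theta}\Bigl(\int|\xi|^{-2s}|\hat f|^2\,d\xi\Bigr)^{\theta},
\]
where the factorization of the integrand is exactly the pointwise identity above together with $|\hat f|^2=(|\hat f|^2)^{1-\theta}(|\hat f|^2)^\theta$. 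Taking square roots gives the claim. I would also remark that when $s=0$ this is the familiar $\|\nabla^l f\|_{L^2}\le\|\nabla^{l+1}f\|_{L^2}^{1-1/(l+1)}\|f\|_{L^2}^{1/(l+1)}$.

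The only genuine subtlety — and the step I'd flag as the main (mild) obstacle — is the integrability bookkeeping near $\xi=0$: the exponent $-2s$ can be negative, so $\int|\xi|^{-2s}|\hat f|^2$ and the splitting are only meaningful for $f$ in a suitable dense class (e.g. Schwartz functions with $\hat f$ vanishing near the origin, or directly assuming the right-hand side is finite), after which the inequality extends by density/monotone convergence; Hölder itself is insensitive to this once all three integrals are finite. So the proof is: reduce to Fourier side by Plancherel, use the pointwise power identity to factor the integrand, apply Hölder with exponents $\tfrac{1}{1-\theta},\tfrac1\theta$, take square roots, and handle the low-frequency integrability by a standard density argument. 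One may also simply cite \cite{Nirenberg1959} or the Littlewood–Paley characterization in \cite{Bahouri-Chemin-Danchin2011}, but the self-contained Fourier argument above is cleanest.
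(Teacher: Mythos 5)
Your proposal is correct and is exactly the paper's argument: the paper's proof consists of the single sentence ``Using the Parseval theorem and H\"{o}lder's inequality, we can directly obtain \eqref{5.2'}'', which is precisely your Plancherel-plus-H\"{o}lder factorization with exponents $\tfrac{1}{1-\theta}$ and $\tfrac{1}{\theta}$. Your identification of $\theta=\tfrac{1}{l+1+s}$ from the pointwise power identity and the remark on low-frequency integrability are sound elaborations of the same route.
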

\begin{proof}
Using the Parseval theorem and H\"{o}lder's inequality, we can directly obtain (\ref{5.2'}).
\end{proof}

Finally, we also provide the following useful regularity results for the Stokes problem:
\begin{Lemma}\label{le5.8}
Assume that $f\in L^{r}(\mathbb{R}^{n})$ with $2\leq r<\infty$. Let $(u,p)\in H^{1}(\mathbb{R}^{n})\times L^{2}(\mathbb{R}^{n})$ be the unique weak solution to the following Stokes problem
\begin{equation*}
\begin{cases}
-\mu\Delta u+\nabla p=f,\\
\diver u=0,\\
u(x)\rightarrow0,\ x\in\mathbb{R}^{n},\ |x|\rightarrow\infty.
\end{cases}
\end{equation*}
Then $(\nabla^{2}u,\nabla  p)\in L^{r}(\mathbb{R}^{n})$ and satisfies
\begin{equation*}
\|\nabla^{2}u\|_{L^{r}(\mathbb{R}^{n})}+\|\nabla p\|_{L^{r}(\mathbb{R}^{n})}
\lesssim \|f\|_{L^{r}(\mathbb{R}^{n})}.
\end{equation*}
\end{Lemma}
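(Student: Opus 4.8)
The plan is to reduce the whole-space Stokes system to a finite list of Calder\'on--Zygmund singular integral operators applied to $f$, and then to invoke the Mikhlin--H\"ormander multiplier theorem; this is the classical route (as in, e.g., Galdi's monograph on the Navier--Stokes equations) and requires no new idea in the setting of $\mathbb{R}^{n}$.

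First I would record uniqueness of the weak solution, so that it may be identified with the one written down explicitly below: if $(v,q)\in H^{1}\times L^{2}$ solves the homogeneous system with $v\to 0$ at infinity, then pairing the momentum equation with $v$ and using $\diver v=0$ (which annihilates the pressure term) gives $\mu\|\nabla v\|_{L^{2}}^{2}=0$, hence $v\equiv 0$ and $\nabla q\equiv 0$. Next, taking the divergence of $-\mu\Delta u+\nabla p=f$ and using $\diver u=0$ gives $\Delta p=\diver f$, i.e. $\widehat{\partial_{j}p}(\xi)=\frac{\xi_{j}\xi_{k}}{|\xi|^{2}}\widehat{f_{k}}(\xi)$ on the Fourier side. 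Substituting back, $-\mu\Delta u=f-\nabla p=\mathbb{P}f$ with $\mathbb{P}$ the Helmholtz (Leray) projection, so that $\widehat{\partial_{i}\partial_{j}u_{k}}(\xi)=-\frac{1}{\mu}\,\frac{\xi_{i}\xi_{j}}{|\xi|^{2}}\big(\delta_{kl}-\frac{\xi_{k}\xi_{l}}{|\xi|^{2}}\big)\widehat{f_{l}}(\xi)$. Each of the symbols occurring here is homogeneous of degree $0$ and $C^{\infty}$ away from the origin, hence obeys the Mikhlin bounds $|\partial_{\xi}^{\beta}m(\xi)|\lesssim|\xi|^{-|\beta|}$; the associated operators are therefore bounded on $L^{r}(\mathbb{R}^{n})$ for every $1<r<\infty$, and in particular for $2\le r<\infty$, which yields $\|\nabla p\|_{L^{r}}+\|\nabla^{2}u\|_{L^{r}}\lesssim\|f\|_{L^{r}}$. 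For $r=2$ the estimate is immediate from Plancherel. It remains only to justify that these multiplier formulas actually represent the weak solution $(u,p)$ of the statement, which follows from the uniqueness step together with a routine density argument approximating $f$ by Schwartz data.

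I do not expect a genuine obstacle here; the statement is classical. The one place that deserves a word of care is the low-frequency region, since $|\xi|^{-2}$ is singular at $\xi=0$: but in each of the formulas above the factor $|\xi|^{-2}$ is always paired with two frequency factors ($\xi_{i}\xi_{j}$ or $\xi_{j}\xi_{k}$), so the effective multipliers stay bounded near the origin and no solvability condition on $f$ beyond $f\in L^{r}$ (plus enough integrability, e.g. $f\in L^{2}\cap L^{r}$, to make the weak formulation meaningful) is needed. If one wished to avoid Fourier analysis entirely, the same bounds follow from the explicit Stokes fundamental solution together with the Calder\'on--Zygmund theory of its second derivatives, which form a homogeneous kernel of degree $-n$ satisfying the required cancellation property.
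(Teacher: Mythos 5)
Your argument is correct and is the standard proof of this classical whole-space Stokes estimate; the paper itself offers no proof beyond a citation to Galdi's monograph, and your Fourier-multiplier/Calder\'on--Zygmund reduction (with the uniqueness and density remarks) is exactly the route that reference takes. Your side comment about needing $f$ in a space where the weak formulation makes sense (e.g.\ $f\in L^{2}\cap L^{r}$) is a fair observation about the statement as written, not a gap in your proof.
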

\begin{proof}
See Lemma 4.3 (p. 322) in \cite{Galdi2011}.
\end{proof}

\section*{Acknowledgements}

This work was partially supported by National Key R\&D Program of China (No. 2021YFA1002900) and Guangzhou City Basic and Applied Basic Research Fund (No. 2024A04J6336).

\bigbreak
\noindent{\bf Conflict of interest.}\\
This work does not have any conflicts of interest.

\end{document}